\newtheorem{prop}{Proposition}
\newtheorem{thm}{Theorem}
\begin{document}

\title[Global dynamics and integrability of  a Leslie-Gower  model]
 {Global dynamics and integrability of  a Leslie-Gower  
predator-prey model with linear functional response and  generalist predator}

\author[Alvarez--Ram\'{\i}rez, Garc\'ia--Salda\~na, Medina]{Martha Alvarez--Ram\'{\i}rez$^{1\orcidlink{0000-0001-9187-1757}}$,  Johanna  D.  Garc\'ia--Salda\~na$^{2\orcidlink{0000-0002-4475-5064} }$ and Mario Medina$^{1 \orcidlink{0000-0003-0295-5145}}$}
\address{$^{1}$ Departamento de Matem\'aticas\\ UAM--Iztapalapa\\09310 Iztapalapa\\ M\'exico City,
M\'exico.}

\address{$^{2}$ Departamento de Matem\'{a}tica y F\'isica Aplicadas,  Universidad Cat\'olica de la Sant\'isima Concepci\'on, 
Alonso de Ribera 2850, Concepci\'on, Chile.}
\email{mar@xanum.uam.mx, jgarcias@ucsc.cl, mvmg@xanum.uam.mx}

\subjclass[2010]{Primary 34A05; Secondary 34A34, 34C14, 34C20}

\keywords{Leslie-Gower,  linear response function, phase portrait, Liouville integrability, Darboux
integrability, Poincar\'e compactification, separatrices}

\maketitle

\begin{abstract}
We deal with a Leslie-Gower predator-prey model with a generalist or alternating food for predator
and  linear functional response. Using a topological equivalent polynomial system we prove  that the system is not Liouvillian  (hence also not Darboux) integrable. 
In order to study the global dynamics of this model, we  use the Poincar\'e compactification of $\mathbb{R}^2$ to  characterize all
 phase portraits in the  Poincar\'e disc, obtaining two different topological phase portraits.
\end{abstract}

\section{Introduction and statement of results}
The problem of investigating  the dynamics of biological systems has attracted the attention of many researchers due to its inherent importance, in particular, many of the scientists have aimed to study the interaction between two animal species where  one of the species preys upon the other in an ecological niche. 
One of the first models to incorporate predator-prey interactions was proposed in 1925 by   Lotka and Volterra.  More recent models to study predator--prey interrelation are those named Leslie-Gower type models. Next, we comment on several aspects that are considered to construct a Leslie-Gower type model for a  predator-prey interaction.
In two works, one by Leslie \cite{Leslie1958}, and later in a joint work with  Gower \cite{Leslie1960}, they  were the first researchers that realized that the existing interference among  predators and prey plays a key role in modeling their interaction. They modeled this by assuming that the contraction in the number of predators   is related to the availability of their preferred food, and this relation is inversely proportional to such availability per individual. Even more, in the case that the number of predators lead to their extinction  when there are no enough prey to feed the predators, it is possible to add up a  new term in the denominator, usually named as extra food. 
Another aspect to take into consideration to get the model  is the way a predator  consumes its prey, since this aspect regulates the dynamics of the entire system. This is introduced  via a function that is named as the functional response. 

In \cite{Das2024}, Das and coauthors   added  the Allee effect  and simultaneous harvesting to a  Holling-Tanner predator-prey model and  showed that under specific hypothesis there are at most three equilibrium points contained in the first quadrant, also both populations,  prey and predators may  coexist if some conditions are fulfilled  and  showed the existence of saddle-node and Hopf bifurcations.  Arancibia-Ibarra and Flores \cite{IbarraF2020} included an Allee effect (strong nad weak)  to a May-Holling-Tanner predator-prey model and  proved that the extinction of both populations is allowed and the existence of  an alternative food for the predator lead to coexistence of both populations, or  the extinction of the prey and also that the prey and predator may coexist with simultaneous oscillating populations. Valenzuela and coauthors \cite{Valenzuela2020} included prey defense as well as a generalist predator in a Leslie-Tanner predator-prey model and gave conditions on the parameters so  that Bautin, Hopf and simultaneous supercritical Hopf bifurcations exist.

In deterministic systems, the presence of first integral or constant motion is known to be responsible for the regular evolution of the system's phase space orbits in well-defined regions of the phase space.
For planar differential systems, the existence of  integrals of motion  (hereafter referred to as the {\em first integral}) presupposes a regular evolution of the system's phase orbits in a well-defined region of phase space, so that the system can be integrated. Therefore,  its phase portrait can be determined, since the system can be integrated. The interested reader is referred to \cite{Chavarriga}. 

In recent years, interest in studying the integrability of differential equations has attracted much attention from the mathematical community.
Darboux's integrability theory plays a central role in the integrability of polynomial differential systems, 
because it provides sufficient conditions for achieving integrability within large family of functions.
We emphasize that it works for real or complex polynomial differential systems and that the study of complex algebraic solutions is necessary to obtain all the real first integrals of a real polynomial differential system.
In particular, if a polynomial differential system has an elementary first integral, then this integral can be computed by using the invariant algebraic curves of the system. The Darboux theory of integrability plays a central role in the integrability of the polynomial differential systems. 
This theory provides sufficient conditions to obtain the first integral using  invariant algebraic curves, also called the {\em Darboux polynomial}.
Roughly speaking,  we may use Darboux integrability theory to  give the conditions of integrability within a family of Liouville functions (that is, functions obtained from complex rational functions by finite processes of integration, exponentiation, and algebraic operations). In fact, Darboux's theory of integrability can be applied to real or complex polynomial differential systems. Nevertheless,  finding the first real integral of a real polynomial differential system requires the study of complex algebraic solutions.  Appendix  \ref{ap_inregra} contains all the results related to Darboux integrability theory required for this paper. The reader can also check the details in the  Chapter 8 of  \cite{Dumortier}.

There are models described by differential equations where one finds solutions that are unbounded, in such a way that to understand them it is necessary to study the vicinity of infinity, in such a way that we can know the behavior of orbits that reach or escape infinity. 
 In the special case in which the associated vector field is polynomial, the global study of the system is carried out through the named Poincar\'e compactification technique, which is an extension of the field to a compact manifold that allows to draw global phase portraits in the Poincar\'e disc.
Strictly speaking, the Poincar\'e disc is a closed disc of radius one centered at the origin of $\mathbb{R}^2$, 
whose interior  has been identified with the whole plane,  and its border is identified with the infinity of $\mathbb{R}^2$.
We would highlight that in a plane we can go to infinity in as many directions as there are points on the circle.
However, there is a unique analysis method that extends the polynomial differential system defined in $\mathbb{R}^2$ to the Poincar\'e disc.
This extended system defined by the Poincar\'e disc allows us to study how the trajectory of a polynomial differential system approaches and moves away from infinity. For further details the reader is addressed to  Appendix \ref{ap_compac} included in this paper or Chapter 5  of~\cite{Dumortier}.

Gonz\'alez-Olivares and Rojas-Palma  in \cite{Gonzalez2020} studied a modified Leslie-Gower type predator-prey model 
with generalist predator as follows:
\begin{equation}\label{eq1}
\begin{array}{l}
 \dfrac{dx}{dt} = rx\left(1-\dfrac{x}{k}\right)-qxy, \vspace{0.2cm}\\
 \dfrac{dy}{dt} = sy\left(1-\dfrac{y}{nx+c}\right),
\end{array}
\end{equation}
where $x$ represents the population of prey and $y$ represents the population of predator.
All the parameters $r$, $k$, $q$, $s$, $n$ and $c$ are positive and their
biological meanings are summarized in Table \ref{tabla1}. 
Actually, they proved that the only positive equilibrium point of \eqref{eq1}, when it exists, is globally asymptotically stable, and  also constructed a suitable Dulac function to prove the nonexistence of periodic orbits.  This means, that 
the solution orbits associated with this point are unbounded.
\begin{table}[hbt]
\begin{tabular}{ |c | l |}
\hline
Parameter &  Description \\\hline
$r$ & intrinsic prey growth rate \\ \hline
$k$ & prey environmental carrying capacity \\ \hline
$q$  & consuming maximum rate per capita of the predators \\\hline
$s$ & intrinsic predator growth rate \\\hline
$n$ & food quality \\\hline
$c$ & amount of alternative food available for predators  \\ \hline
		\end{tabular}
\caption{Biological meanings of parameters in system \eqref{eq1}.}\label{tabla1}
	\end{table}

This system \eqref{eq1} is a  Kolmogorov-type system, so  the axes are invariant
(forwards and backwards in time), see  e.g.  \cite{Dumortier, Freedman}. So,  it  is defined in the first quadrant $\Omega = \{ (x,y)\in\mathbb{R}^2 \mid x\geq 0, \; y\geq 0\}$.

To the best of our knowledge, up to now,  no researcher has yet investigated the integrability of  system  \eqref{eq1}
in the Liouville sense.  Furthermore, the description of the global dynamics in Poincar\'e disc has also not been given.
Therefore, we aim to investigate them using Darboux functions and Poincar\'e compactification, respectively.

Nowadays, the Poincar\'e compactification  has been used by several authors to study the global dynamics of
some versions of prey-predator models.
 Although not exhaustive, the following works can be mentioned.
Llibre and Valls \cite{VallsL}  considered the classic Leslie-Gower model, i.e., without generalist predator, 
 performing a  complete description of its phase portraits, depending on the parameters, in the Poincar\'e disc modulo topological equivalence.
 In \cite{Diz-Pita2}, Diz-Pita   et al.  classified the global dynamics of a 
 Rosenzweig and MacArthur system, which is a   predator-prey systems with functional response Holling type II.
As well, paper \cite{Diz-Pita} by  Diz-Pita deals with a  prey-predator model  including an immigration term in both species,  i.e.,  a system with a Holling type I functional response, where global dynamics is performed by studying global phase portraits in the positive quadrant of the Poincar\'e disc.

At this point, before continuing with our objectives, we should point out that  system \eqref{eq1} is non-polynomial, so the 
previous methods regarding to  integrability and Poincar\'e compactification cannot be applied directly.
This problem will be solved by  reparameterizing  the variables and time.
In order to activate this, we shall follow the approach used in several different types of prey-predator 
(see, for instance \cite{VallsL}, \cite{saez}), to  transform system \eqref{eq1} to a topologically equivalent system. We achieve this by introducing
 the dimensionless variables and the time rescaling, given by the function
 $\psi: {\tilde\Omega}\times\mathbb{R} \to \Omega\times \mathbb{R}$, so that
$$
 \psi (u,v,\tau)= \left(  ku,knv,\dfrac{u+ \frac{c}{nk}}{r}\tau\right) = (x,y,t),
$$
where ${\tilde\Omega} = \{ (u,v)\in\mathbb{R}^2 \mid 0\leq u\leq 1, \; v\geq 0\}$. 
The mapping $\psi$ is a diffeomorphism for $(u,v)\neq (0,0)$  that preserves the orientation of time, since  
$\det \big(D\psi (X,Y,\tau) \big)= \dfrac{k^2 n}{r}\left(u+\dfrac{c}{nk} \right)>0$, see  \cite{Dumortier}.
Hence, in the new coordinates,  system \eqref{eq1} is   topologically
equivalente to the following  polynomial system, where we still denote $\tau$ by $t$ and
rename the variables $(u,v)$  as $(x, y)$:
\begin{equation}\label{eq22}
\begin{array}{l}
 \dfrac{dx}{dt}= u\left( \dfrac{c}{kn}+x\right)\left(1-x- \dfrac{knq}{r}y\right), \vspace{0.2cm}\\
 \dfrac{dy}{dt}=\dfrac{s}{r} y \left( \dfrac{c}{kn} + x-y\right).
\end{array}
\end{equation}

Next, we set $A=\dfrac{knq}{r}$, $B=\dfrac{s}{r}$ and $C=\dfrac{c}{kn}$,  all of which are positive. 
Using these variables, \eqref{eq22} is then transformed into the following equivalent polynomial system:
\begin{equation}\label{eq4}
\begin{array}{l}
\dfrac{dx}{dt}= x(C+x)(1-x-Ay), \vspace{0.2cm}\\
\dfrac{dy}{dt}=By (C+x-y).
\end{array}
\end{equation}

Now, we are able to establish our main result on the Liouville integrability of the  differential system \eqref{eq4}, 
whose proof will be given in Section \ref{sec_teo1}.

\medskip
\begin{thm}\label{Theo1}  System \eqref{eq4} is not Liouvillian integrable. \end{thm}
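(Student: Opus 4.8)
The plan is to reduce Liouvillian integrability of \eqref{eq4} to the (non)existence of a Darboux integrating factor, and then to exclude the latter. By Singer's theorem together with Christopher's refinement (Appendix \ref{ap_inregra}), the system \eqref{eq4} has a Liouvillian first integral if and only if it admits an integrating factor of Darboux type
$$R = \exp(g/h)\,\prod_i f_i^{\lambda_i},$$
where the $f_i$ are the irreducible invariant algebraic curves (Darboux polynomials), $\exp(g/h)$ is an exponential factor, and $\lambda_i\in\mathbb{C}$. Writing $X$ for the vector field of \eqref{eq4}, this is equivalent to the cofactor identity
$$\sum_i \lambda_i K_i + L = -\operatorname{div} X,$$
where $K_i$ and $L$ are the cofactors of the $f_i$ and of $\exp(g/h)$, each of degree at most $2$. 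So it suffices to show that this identity has no solution, for which I must first classify the Darboux polynomials and the exponential factors.

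First I would record the three obvious invariant lines $f_1=x$, $f_2=y$, $f_3=C+x$, with cofactors
$$K_1=(C+x)(1-x-Ay),\quad K_2=B(C+x-y),\quad K_3=x(1-x-Ay),$$
and compute
$$\operatorname{div} X=-3x^2-2Axy+(2-2C+B)x-(AC+2B)y+C(1+B).$$
The key structural observation is that the quadratic part of $-\operatorname{div} X$ is $3x^2+2Axy=x(3x+2Ay)$, whereas the quadratic parts of $K_1$ and $K_3$ both equal $-x^2-Axy=-x(x+Ay)$, and $K_2$ is linear. Since $3x+2Ay$ is not proportional to $x+Ay$ (here $A>0$), these three cofactors alone cannot reproduce the quadratic part of $-\operatorname{div} X$.

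To turn this into a proof I must show that no further Darboux polynomial or exponential factor can supply the missing quadratic direction. I would analyse the homogeneous leading part $f_n$ of an arbitrary Darboux polynomial through the top-degree equation $-x^2(x+Ay)\,\partial_x f_n = K^{(2)} f_n$, which forces $f_n$ to be a product of powers of $x$, $y$ and $x+Ay$; a curve with leading form $x^i(x+Ay)^j y^k$ then has quadratic cofactor $-(i+j)x^2-iAxy$. Hence, if every Darboux polynomial has $j=0$, all quadratic cofactor parts are multiples of $x^2+Axy$, and so is any linear combination of them — which can never equal $3x^2+2Axy$. The only loophole is a curve whose leading form is divisible by the non-invariant factor $x+Ay$ (the third infinite singular direction, since $yP_3=-x^2y(x+Ay)$ in the Poincar\'e compactification of Appendix \ref{ap_compac}); for instance a leading form $x^2(x+Ay)y^k$ yields quadratic cofactor $-(3x^2+2Axy)$, exactly the forbidden ratio. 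Ruling such curves out is therefore the heart of the matter: I would propagate the leading-part equation to lower orders and perform a local analysis at the infinite singular point in the direction $x+Ay$ to show that no algebraic invariant curve emanates there, concluding that $j=0$ always and that the only Darboux polynomials are products of $x$, $y$, $C+x$. The exponential factors are dispatched separately: one checks directly that every admissible nontrivial exponential factor (for example $\exp(\beta y)$, with cofactor $\beta B y(C+x-y)$) has a cofactor whose quadratic part carries a nonzero $y^2$-term, whereas $-\operatorname{div} X$ has none, so $L$ contributes nothing to the quadratic matching. With both contributions excluded, comparing the coefficients of $x^2$ and $xy$ in the cofactor identity gives $\lambda_1+\lambda_3=-3$ and $\lambda_1+\lambda_3=-2$ simultaneously, a contradiction, and Theorem \ref{Theo1} follows.

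The main obstacle is precisely this complete classification of the Darboux polynomials — namely excluding irreducible invariant curves of arbitrary degree whose leading form contains the factor $x+Ay$; once that is established, the non-integrability reduces to the short coefficient comparison above.
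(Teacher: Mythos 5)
Your overall strategy is sound and your algebra is correct: the reduction via Singer's theorem to the nonexistence of a Darboux integrating factor, the three cofactors, the divergence, the classification of homogeneous leading forms of Darboux polynomials as $x^i(x+Ay)^jy^k$ with quadratic cofactor part $-(i+j)x^2-iAxy$, and the final incompatibility $\lambda_1+\lambda_3=-3$ versus $\lambda_1+\lambda_3=-2$ all check out. But the proposal is not a proof, and you say so yourself: the step you call ``the heart of the matter'' --- showing that no irreducible invariant algebraic curve has $x+Ay$ dividing its leading form --- is never carried out, and it is exactly the case in which your contradiction evaporates. A hypothetical curve with leading form $x^2(x+Ay)y^k$ has quadratic cofactor part $-(3x^2+2Axy)$; taken with coefficient $\lambda=-1$ it reproduces the quadratic part of $-\operatorname{div}X$ exactly, and the coefficient comparison then imposes no contradiction at all. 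Your proposed route to close this (propagating the leading-form equation to lower orders, plus a local analysis at the corresponding infinite singular point) is a plan rather than an argument, and it is not routine: that point is the equilibrium $(-1/A,0)$ of system \eqref{Poincare1}, whose linearization has eigenvalues $-1$ and $0$, so it is semi-hyperbolic, and semi-hyperbolic points can perfectly well carry invariant algebraic branches (a center manifold can be algebraic); only an explicit computation can rule this out.

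There is a second, smaller gap in your dispatch of exponential factors, which you verify only for $F=\exp(\beta y)$. You must also exclude (i) $\exp(g)$ with $\deg g\geq 2$, which a short top-degree argument handles (the leading form must satisfy $\partial_x g_m=0$, so $g_m=cy^m$, and the next homogeneous level forces $c=0$ when $m\geq 2$), and, more importantly, (ii) factors $\exp(g/f)$ with $f$ nonconstant, which exist precisely when some invariant algebraic curve has algebraic multiplicity greater than one (Proposition \ref{prop_integra}) and whose cofactors you have not computed; their quadratic parts need not contain a $y^2$-term a priori. The paper excludes (ii) by computing the extactic curve ${\mathcal E}_1$ and observing that $x$, $x+C$ and $y$ divide it only to the first power, hence each has multiplicity one. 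For comparison: the paper follows the same skeleton as you (Singer plus cofactor matching, checking that no linear combination of the four cofactors equals $0$ or $-\operatorname{div}X$), but it controls exponential factors via the extactic curve and a degree count rather than your $y^2$ observation, and --- notably --- it never addresses invariant algebraic curves of degree at least two, so the lemma you correctly single out as essential is silently bypassed there as well. Your diagnosis of where the real work lies is therefore accurate, but as submitted the proposal is a strategy whose central lemma remains unproved.
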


 In \cite{Gonzalez2020}, the authors proved that 
 the interior equilibrium point,   hereinafter denoted by  $E^*$,  is a global attractor in this open quadrant of $\mathbb{R}^2_+$, but it is  unknown where the orbits attracted by $E^*$ are born. Next theorem provides  the global topological phase portraits of  system \eqref{eq4}.
 
\begin{thm}\label{Theo2} 
The global phase portrait of system \eqref{eq4}, in the closed positive quadrant of the Poincar\'e disc, is topologically equivalent to one of the two phase portraits presented in Figure~\ref{fig:global1}. 
\begin{figure}[h]
\centering
\subfigure[$1-AC\leq 0$]{
\includegraphics[width=4cm]{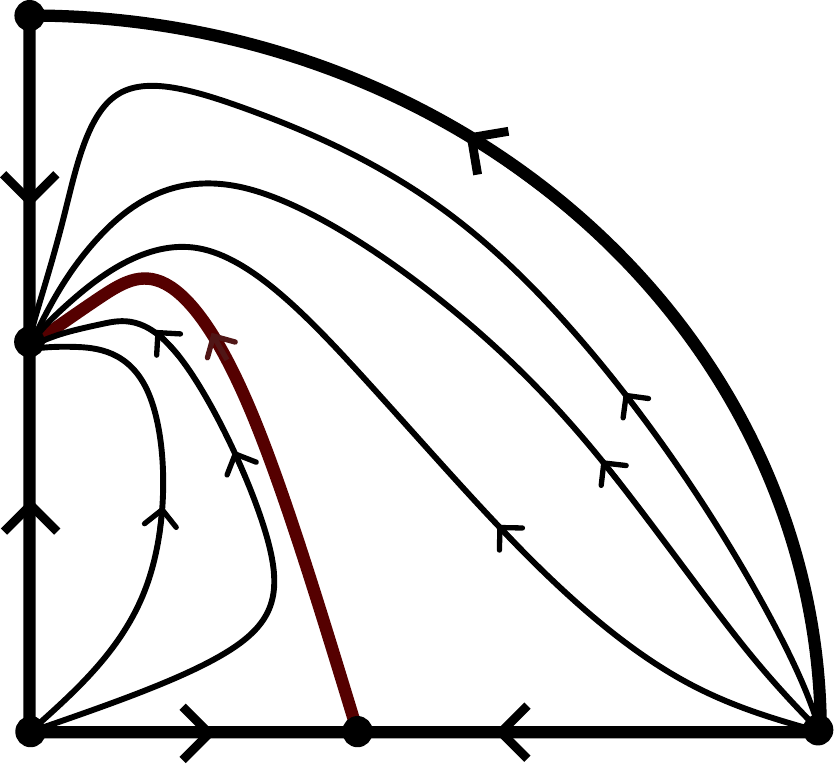}\hspace{0.8cm}}
\subfigure[$1-AC>0$]{
\includegraphics[width=4cm]{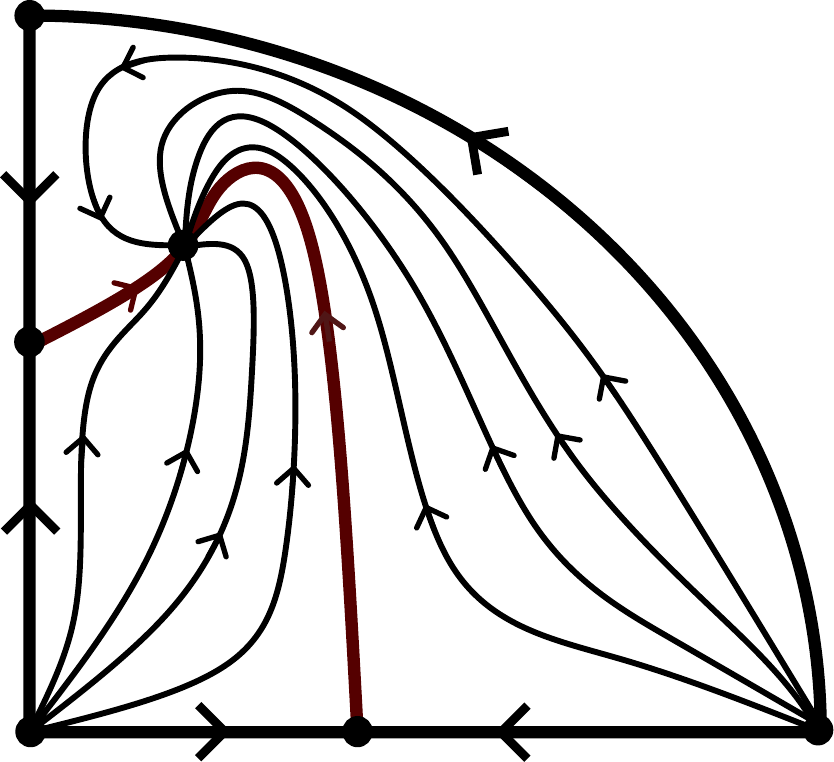}}
\caption{Global phase portrait of system \eqref{eq4} at infinity,  shown in  the closed positive quadrant of the Poincar\'e disc.}
\label{fig:global1}
\end{figure}
\end{thm}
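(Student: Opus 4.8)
The plan is to build the global phase portrait of system \eqref{eq4} by locating and classifying \emph{every} singularity in the closed positive quadrant of the Poincar\'e disc---both the finite ones and those on the line at infinity---and then assembling the local pictures into a global one using the absence of periodic orbits together with Poincar\'e--Bendixson theory and an index argument. First I would compute the finite singularities. Setting the right-hand sides of \eqref{eq4} to zero and using $A,B,C>0$, the invariance of the axes gives the boundary equilibria $O=(0,0)$, $(1,0)$ and $(0,C)$, while the intersection of the nontrivial nullclines $1-x-Ay=0$ and $y=C+x$ yields
\[
E^*=\left(\frac{1-AC}{1+A},\,\frac{1+C}{1+A}\right).
\]
The crucial observation is that $E^*$ lies in the open quadrant precisely when $1-AC>0$, and that at $1-AC=0$ it collides with $(0,C)$; this single inequality is what splits the analysis into the two cases of Figure~\ref{fig:global1}.

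Next I would classify the finite singularities through their linear parts. A direct Jacobian computation gives that $O$ is a hyperbolic unstable node and $(1,0)$ a hyperbolic saddle, independently of the parameters. At $(0,C)$ the eigenvalues are $-BC<0$ and $C(1-AC)$, so $(0,C)$ is a stable node when $1-AC<0$, a saddle when $1-AC>0$, and semi-hyperbolic when $1-AC=0$; the degenerate case is treated by a center-manifold reduction and falls into the regime $1-AC\le 0$. When $E^*$ exists I would invoke the result of \cite{Gonzalez2020}, which guarantees it is a global attractor in the open quadrant, so for the topological classification it suffices to record that $E^*$ is a sink.

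For the behavior at infinity I would apply the Poincar\'e compactification. Writing $P,Q$ for the components of \eqref{eq4} (of degrees $3$ and $2$) and forming $\mathcal{F}=xQ_3-yP_3=x^2y(x+Ay)$, the only singular directions meeting the closed first quadrant are the endpoints of the positive $x$- and $y$-axes. In the chart $U_1$ the endpoint of the positive $x$-axis is a hyperbolic node, and on the arc of the equator between the two corners there are no equilibria, the flow running from the $x$-direction toward the $y$-direction. The endpoint of the positive $y$-axis, analysed in the chart $U_2$, is the main obstacle: its linear part vanishes identically and the lowest-order terms are quadratic, $\dot u=-Au^2+(B-AC)uv+\cdots$, $\dot v=Bv^2+\cdots$, so it must be resolved by blow-up. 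Performing the directional blow-up $u=vw$ and rescaling time, the exceptional divisor carries a hyperbolic saddle at $w=0$ (among the zeros of $w(w+C)$); tracking this singularity together with the complementary chart reconstructs the local sectorial structure and the separatrices emanating from this infinite point.

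Finally I would glue the local data into the two global portraits. The axes $x=0$ and $y=0$ are invariant and the one-dimensional dynamics on each is explicit ($\dot x=x(C+x)(1-x)$ and $\dot y=By(C-y)$), which fixes the flow along the boundary and the direction in which each saddle separatrix leaves it. The decisive ingredient is the nonexistence of periodic orbits, which transfers from \eqref{eq1} via the Dulac function of \cite{Gonzalez2020} (or is re-derived directly for \eqref{eq4}); combined with Poincar\'e--Bendixson this forces every interior orbit to have its $\omega$-limit among the sinks already found---$(0,C)$ when $1-AC\le 0$ and $E^*$ when $1-AC>0$---and pins down the unstable separatrices of the saddles. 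A Poincar\'e--Hopf index count on the disc serves as a consistency check and rules out spurious connections. I expect the delicate points to be the blow-up of the degenerate $y$-infinity singularity and the verification that, once all separatrices are placed, no topological alternative survives; both are controlled by the absence of limit cycles and the explicit boundary flow.
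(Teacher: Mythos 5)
Your proposal follows essentially the same route as the paper's proof: classify the finite equilibria via the Jacobian (with a center-manifold argument for $(0,C)$ in the degenerate case $1-AC=0$), analyze the infinite equilibria in the Poincar\'e charts $U_1$ and $U_2$, resolve the degenerate singularity at the end of the positive $y$-axis by directional blow-ups, and then assemble the two global portraits using the nonexistence of periodic orbits and the global attractivity results of \cite{Gonzalez2020}. Your extra ingredients (locating infinite singular directions via $xQ_3-yP_3=x^2y(x+Ay)$ and the Poincar\'e--Hopf index check) are correct but only supplementary to the same method.
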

Theorem  \ref{Theo2} will be  proved in Section \ref{sec_teo2}.

The two main objectives of this article are  mainly mathematical, but they certainly make biological sense.
These help explain the dynamic behavior of the generalist predator-prey system in terms of specific parameters.
More precisely, from a biological point of view, the results strongly confirm that alternative foods contribute to the conservation or extinction of species.

\section{Proof of the Theorem  \ref{Theo1}}\label{sec_teo1}
Since Liouville integrability is based on the existence of invariant algebraic curves and their multiplicity through the exponential factors,
in order to  prove the  non-integrability in the Louville sense, we need to characterize its polynomial first integrals, Darboux polynomials and exponential factors.

From \cite{chris}, we know   that the existence of exponential factors $\exp (g/f)$ 
is due to the fact that the multiplicity of the invariant algebraic curve $f = 0$ is larger than~1, and when $f$ is constant, the existence 
of the exponential factor depends on  the multiplicity of the straight line at infinity in the projective plane.
So, we shall check those conditions.

We denote by 
$$
X = x(C+x)(1-x-Ay) \frac{\partial}{\partial x} + By (C+x-y) \frac{\partial}{\partial y}
$$
the vector field defined by system  \eqref{eq4}.
We begin by noting that from \eqref{eq4} is satisfied  that $\dot{x}\mid_{x=0}=0$, $\dot{x}\mid_{x+C=0}=0$ and $\dot{y}\mid_{y=0}=0$.
Hence,   $f_1=x$, $f_2=x+C$ and $f_3=y$ are invariant algebraic curves of the 
 differential system \eqref{eq4}. 

 Now we must find the respective cofactors associated to the curves $f_j=0$, $j=1,2,3$.
Seeking the cofactor expression  we shall use the definition \eqref{eq2_apint}.
Then the corresponding cofactors for $x=0$, $x+C=0$  and $y=0$ are given by 
\begin{align*}
& K_1 = -(C+x) (-1+x+ Ay),\\
& K_2 =-x(-1+x+ Ay),\\
& K_3= B(C+x-y).
\end{align*}

Now, we will  discuss the algebraic multiplicity of the invariant algebraic curves. 
We have that the  extactic curve ${\mathcal E}_1$ of $X$ is described by
{
\begin{align*}
{\mathcal E}_1 =  & -B x (C + x) y \big[C^2(1 - B) + C (2 - 2 B - 3 C + BC) x  - C (1 - 3 B + 2 AC) y\\
& \;  + (1 - B - 6 C + 2 BC + 2 C^2) x^2  -(2 - 3 B - 3 C + 4 AC + 3 BC - 3 A C^2) x y \\
& \; -(B - AC) (2 + A C) y^2 - (3 - B - 4 C) x^3  +  (5 - 2 A - 3 B - 2 C + 6 AC) x^2 y    \\
& \; + (4 A + 2 B - AB - 3 AC + 2 A^2 C) x y^2+ 2 x^4 + 3 (A-1) x^3 y   + (A-5) A x^2 y^2 \\
& \; -A ( AC-B  + 2 A x) y^3
\big].
\end{align*}}
This implies that each algebraic curve given  $x = 0$, $x+C=0$ and $y =0$ is a power factor one of ${\mathcal E}_1$.
Consequently,  the multiplicity of these invariant algebraic curves  is 1, and it  turns out that  they do not give rise to exponential factors of the form $\exp (g/f)$. However, we still have to check if there are exponential factors coming from the invariant curve at infinity.   

Since the degree of the system \eqref{eq4} is 3, the cofactor has at most degree 2.
In what follows, we consider the algebraic multiplicity of an invariant algebraic curves.
We first consider the case $F= \exp (a_{00}+ a_{10} x+ a_{01}y)$. By using the 
 definition of   exponential factor, we  arrive at   (see Appendix \ref{ap_inregra}):
$$
 x(C+x)(1-x-Ay) \frac{\partial F}{\partial x}  +  By (C+x-y)   \frac{\partial F}{\partial y} = K_4(x,y)  F(x,y),
$$
but the cofactor $K_4(x,y)$ must have at most degree 2, this enforces that $a_{10} =0$. Thus  $F= \exp (a_{00}+  a_{01}y)$
with cofactor $K_4(x,y) = B  a_{01} (C + x - y) y $.

Now,   we  shall check  if  there  is  a  second  exponential  factor  having the form
$G= \exp (a_{00}  + a_{01}y + a_{20} x^2+ a_{11} x y+a_{02}y^2)$  coming  from   the   invariant   curve   at   infinity.
Repeating the above  process and taking into account that  cofactor  have  degree  at  most  2, we obtain 
that $a_{20}=a_{11}=a_{02}=0$. Therefore, $G= \exp (a_{00}  + a_{01}y)$
and  we  get  the  previous  exponential  factor. So  there  are  no  additional  exponential  factors.

The next step is to apply the statement (a) of Theorem \ref{teo_app} in the Appendix   \ref{ap_inregra} (see also statement (i) of Theorem 8.7 of \cite{Dumortier}), there exists a linear combination between the factors $K_j$ if and only if the first Darboux integral exists. 
The linear combination becomes
$$
\lambda_1  K_1 + \lambda_2 K_2 + \lambda_3 K_3 + \lambda_4 K_4 =0,
$$
where $\lambda_j\in {\mathbb C}$, $j=1,2,3,4$. By some calculations we get that this equation is satisfied if only if 
$\lambda_1 = \lambda_2 = \lambda_3= \lambda_4=0$  or 
 $\lambda_1 = \lambda_2 = \lambda_3= 0$ with $a_{01}=0$ and $\lambda_4\in {\mathbb C}$.
Since $a_{01}=0$, we have that  $F(x,y)=\exp ( a_{00})$.
We noticed that this is not a  Darbouxian integrating factor because it is a constant function.

Continuing in the same spirit, we proceed to calculate the divergence
of system~\eqref{eq4}. This becomes  
$$\text{div} (X) = (1 + B) C  - (2 B + A C) y +  (2 + B - 2 C)x - 3 x^2- 2 A xy.$$
According to statement (b) of Theorem \ref{teo_app} of the Appendix \ref{ap_inregra}, there is a linear combination between the cofactors 
$K_j$, $j=1,2,3,4$, and the  divergence of the system, if and only if there exists a Darboux integrating factor \eqref{sys_pq2}. But,
we can easily check that
the linear combination of the cofactors and the divergence of the vector field
$$
\lambda_1  K_1 + \lambda_2 K_2 + \lambda_3 K_3  + \lambda_4 K_4 + \text{div} (X) =0,
$$
 cannot be satisfied for $\lambda_j$, not all zero. So, there is no such a  linear combination.

In summary, from the calculations above, the Theorem \ref{teo_app} in Appendix   \ref{ap_inregra},
we conclude that the differential system \eqref{eq4}  does not have any Darboux function as integrating factor.
In view of  Theorem \ref{teo_darint}  of Appendix \ref{ap_inregra},  system \eqref{eq4} does not have any Liouvillian first integral.
This concludes the proof of theorem \ref{Theo1}.

\section{Proof of the Theorem  \ref{Theo2}}\label{sec_teo2}
In order to prove Theorem  \ref{Theo2} we will study the behavior of the finite and infinite equilibria.
Indeed, it is not necessary to study the existence of limit cycles, since in \cite{Gonzalez2020} the authors proved that they do not exist.

\subsection{The finite equilibrium points}\label{pts1}
As we remarked before, the system  \eqref{eq4}  is a Kolmogorov type, then the $x$-axis and $y$-axis are invariant sets.

\begin{prop}
The set 
${\tilde\Omega} = \{ (x,y)\in\mathbb{R}^2 \mid 0\leq x\leq 1, \; y\geq 0\}$
is a positively invariant and attracting region for the flows of  system \eqref{eq4} in the first quadrant.
\end{prop}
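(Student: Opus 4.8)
The plan is to verify the two assertions separately: \emph{positive invariance} by examining the direction of the vector field \eqref{eq4} along the boundary of $\tilde\Omega$, and the \emph{attracting} property by exploiting the sign of $\dot x$ in the region $x>1$.

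First I would establish positive invariance. The boundary of $\tilde\Omega$ consists of three pieces: the part lying on $x=0$, the part lying on $y=0$, and the vertical line $x=1$ (the set is unbounded upward in $y$, so there is no top boundary to control). On $x=0$ one reads off $\dot x=0$ and on $y=0$ one reads off $\dot y=0$, so both coordinate axes are invariant, consistent with the Kolmogorov structure already noted. On the remaining edge $x=1$ a direct substitution gives $\dot x\big|_{x=1}=(C+1)(1-1-Ay)=-A(C+1)\,y\le 0$ for all $y\ge 0$, so the flow never points to the right across $x=1$. Since the field is tangent or inward on every boundary piece, no orbit can leave $\tilde\Omega$, which gives positive invariance.

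Next I would prove that $\tilde\Omega$ attracts every orbit of the first quadrant. The only part of the quadrant outside $\tilde\Omega$ is the strip $x>1$, $y\ge 0$, and there $1-x-Ay<0$ while $x(C+x)>0$, so $\dot x<0$: the $x$-coordinate is strictly decreasing. To turn this into actual entry into $\tilde\Omega$ I must rule out the degenerate possibility that $x(t)$ decreases but converges to a limit $\ge 1$ without ever reaching $1$; this would force $\dot x\to 0$ and hence $y(t)\to 0$. The key observation is that the $y$-equation prevents this: in the region $x>1$ one has $C+x-y>C+1-y$, so whenever $y<C+1$ we get $\dot y>0$, and therefore $y(t)\ge \eta:=\min\{y(0),\,C+1\}>0$ for every orbit with $y(0)>0$. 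With this uniform lower bound, $1-x-Ay\le -A\eta$ and $x(C+x)\ge C+1$ on $x\ge 1$, so $\dot x\le -A\eta(C+1)<0$ is bounded away from zero; hence $x(t)$ must reach $x=1$ in finite time and the orbit enters $\tilde\Omega$. For the boundary case $y(0)=0$ the orbit lies on the invariant $x$-axis, where $x\to 1$, so it converges to $(1,0)\in\tilde\Omega$. In every case the distance to $\tilde\Omega$ tends to zero.

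The main obstacle is precisely this attracting step: the monotonicity $\dot x<0$ alone does not guarantee that an orbit crosses $x=1$, and the argument must be closed by controlling $y$ from below, which relies on the logistic-type structure of the $y$-equation. I would also record, in order to justify that the crossing happens before any loss of existence, that no orbit blows up in finite time in $x>1$: for large $y$ the dominant term of $\dot y$ is $-By^2<0$, so $y$ stays bounded, while $x$ is decreasing and bounded; hence solutions are defined for all forward time until they reach $\tilde\Omega$.
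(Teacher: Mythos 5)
Your proof is correct, and it is strictly more complete than the one in the paper. For positive invariance you and the paper do the same thing: use the Kolmogorov structure to get invariance of the two axes and compute $\dot x\big|_{x=1}=-A(1+C)y\le 0$ on the remaining edge (the paper writes this with a strict inequality, ignoring the equilibrium $(1,0)$, and with a typo in $\dot y\big|_{x=1}$, which should be $By(C+1-y)$; neither matters for the conclusion). The difference is in the attracting property: the paper stops after the boundary computation and simply asserts that ``the trajectories enter and stay in the region,'' whereas you actually prove it. Your key step --- using the logistic structure of the $\dot y$ equation to show that $y(t)\ge\eta:=\min\{y(0),C+1\}>0$ as long as the orbit stays in the strip $x>1$, hence $\dot x\le -A\eta(C+1)$ is bounded away from zero and the orbit crosses $x=1$ in finite time --- is exactly what is needed to rule out the scenario where $x(t)$ decreases to a limit larger than $1$, a gap the paper's argument leaves open (monotonicity of $x$ alone does not give entry into $\tilde\Omega$). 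Your separate treatment of orbits on the invariant $x$-axis (which only converge to $(1,0)$ rather than enter $\tilde\Omega$, so ``attracting'' must be read as distance tending to zero) and the remark excluding finite-time blow-up are also refinements absent from the paper. In short: same mechanism for invariance, but your attractivity argument supplies the analysis that the paper's proof takes for granted.
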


\begin{proof}
Since \eqref{eq4} is a Kolmogorov type, the $x$--axis and the $y$--axis are invariant sets. Additionally, if  $x=1$ we  get that
$dx/dt= -A(1+C)y<0$ and whatever it is the sign of $dy/dt=By(C-y)$
the trajectories enter and stay in the region $\tilde{\Omega}$.
\end{proof}

A straightforward computation shows that system \eqref{eq4} has
the trivial equilibrium point $E_0 = (0, 0)$,  the prey-free equilibrium point $E_1 = (0,C)$ and the
predator-free equilibrium point $E_2 = (1,0)$. The only  co-existence or interior equilibrium is   $E^*(x_*,y_*) = (\frac{1- AC}{1+A},\frac{1+C}{1+A})$ with $1- AC >0$.

Now, the local stability of the above equilibrium points is investigated using the  Jacobian matrix, namely
\begin{equation}
J=\left(\begin{array}{cc}
x(2-3x-2Ay)-C(-1+2x+Ay) & -Ax(C+x)  \\
By&  B(C+x-2y) 
\end{array}\right).\label{jacob}
\end{equation}

\begin{prop}
The equilibrium point $(0,0)$ is  a repeller point, while point $(1, 0)$ is a  saddle point.
\end{prop}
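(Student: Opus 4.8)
The plan is to classify each equilibrium by evaluating the Jacobian matrix \eqref{jacob} at the point in question and reading off its eigenvalues. A useful structural observation guides the whole computation: the lower-left entry of $J$ equals $By$, so at any equilibrium lying on the $x$-axis (where $y=0$) the Jacobian is upper triangular, and its eigenvalues are then simply its two diagonal entries. Both $(0,0)$ and $(1,0)$ lie on the invariant $x$-axis, so this simplification applies in each case and I never need to solve a genuine quadratic characteristic equation.

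First I would substitute $(x,y)=(0,0)$ into \eqref{jacob}. Both off-diagonal entries vanish, and the matrix reduces to a diagonal one whose entries are $C$ (from $-C(-1)$ in the upper-left slot) and $BC$ (from $B(C+0-0)$ in the lower-right slot). Since $B,C>0$, both eigenvalues are strictly positive, so the linearization has a two-dimensional unstable manifold. Hence $(0,0)$ is a hyperbolic source, i.e. a repeller.

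Next I would substitute $(x,y)=(1,0)$. The lower-left entry $By$ again vanishes, leaving an upper-triangular matrix. The upper-left diagonal entry becomes $1\cdot(2-3)-C(-1+2)=-(1+C)$, and the lower-right entry becomes $B(C+1-0)=B(1+C)$. Because $C>0$ the first eigenvalue $-(1+C)$ is negative, while $B,C>0$ force the second eigenvalue $B(1+C)$ to be positive. The two eigenvalues thus have opposite signs, so $(1,0)$ is a hyperbolic saddle.

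There is essentially no obstacle here: the triangular form of $J$ along the invariant $x$-axis makes the spectrum explicit, and the sign determination is immediate from the standing hypothesis that $A,B,C$ are positive. The only point requiring (trivial) care is to confirm \emph{hyperbolicity}, i.e. that no eigenvalue vanishes; this is guaranteed by $C>0$ at both points, and it is precisely what licenses us to read off the topological type directly from the linearization rather than invoking a center-manifold or normal-form analysis.
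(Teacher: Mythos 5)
Your proposal is correct and follows essentially the same route as the paper: evaluate the Jacobian \eqref{jacob} at each equilibrium, note that it is (upper) triangular there since the lower-left entry $By$ vanishes on the $x$-axis, and read off the eigenvalues $C, BC>0$ at $(0,0)$ (repeller) and $-(1+C)<0$, $B(1+C)>0$ at $(1,0)$ (saddle). The only difference is presentational: you make the triangularity-along-the-invariant-axis observation explicit, while the paper simply displays the two evaluated matrices.
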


\begin{proof}
The Jacobian matrix \eqref{jacob} at $E_0$ can be written as 
$$J (0,0)=\left(\begin{array}{cc}
C & 0  \\
0&  BC 
\end{array}\right).$$
Hence the eigenvalues  are $C>0$ and $BC>0$. Therefore, the equilibrium $(0,0)$ is a  repeller point.
Similarly, the Jacobian matrix \eqref{jacob} evaluate at the equilibrium point $(1, 0)$ becomes
$$
J (1,0)= \left(\begin{array}{cc}
-1-C & -A(1+C)  \\
0 &  B(1+C) 
\end{array}\right),$$
with eigenvalues $\lambda_1= -1-C <0$ and $\lambda_2=B(1+C)>0$. Thus  $(1, 0)$ is
a saddle point. 
\end{proof}

\begin{prop}
The equilibrium point $E_1=(0, C)$  is
\begin{enumerate}
\item[{1.}] an  attractor point if  $1-AC<0$;
\item[{2.}] a  saddle point if  $1-AC>0$;
\item[{3.}] a  saddle-node if  $1-AC=0$.
\end{enumerate}
\end{prop}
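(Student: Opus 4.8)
The plan is to read off the local behaviour directly from the Jacobian \eqref{jacob} evaluated at $E_1=(0,C)$. Substituting $x=0$ and $y=C$ produces the lower triangular matrix
$$
J(0,C)=\left(\begin{array}{cc} C(1-AC) & 0 \\ BC & -BC \end{array}\right),
$$
so its eigenvalues are the diagonal entries $\lambda_1=C(1-AC)$ and $\lambda_2=-BC$. Since $B,C>0$ we always have $\lambda_2<0$. If $1-AC<0$ then $\lambda_1<0$ as well and $E_1$ is a hyperbolic attractor; if $1-AC>0$ then $\lambda_1>0>\lambda_2$ and $E_1$ is a hyperbolic saddle. In both situations the Hartman--Grobman theorem immediately yields statements (1) and (2), so the genuine work lies in the borderline case (3).

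When $1-AC=0$ we have $\lambda_1=0$, so $E_1$ is semi-hyperbolic and the linearization carries no information. I would translate $E_1$ to the origin by $\xi=x$, $\eta=y-C$ and then pass to the eigenbasis of $J(0,C)$: its kernel is spanned by $(1,1)$ and the $\lambda_2$-eigenspace by $(0,1)$, so the substitution $u=\xi$, $w=\eta-\xi$ brings the system to the semi-hyperbolic normal form $\dot u=P(u,w)$, $\dot w=-BC\,w+Q(u,w)$, where $P$ and $Q$ collect terms of order at least two. From here one applies the standard classification of semi-hyperbolic singular points (see \cite{Dumortier}), for which one only needs the order and sign of the first nonvanishing term of the flow restricted to the center manifold $w=h(u)$ with $h(u)=O(u^2)$.

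Using $AC=1$, the quadratic part of $P$ that is independent of $w$ equals $-(C+1)u^2$, while substituting $w=h(u)=O(u^2)$ modifies only terms of order $u^3$ and higher; hence the reduced flow is $\dot u=-(C+1)u^2+O(u^3)$. The first nonvanishing coefficient $-(C+1)$ is nonzero and its exponent $m=2$ is even, which by the semi-hyperbolic classification forces $E_1$ to be a saddle-node, proving statement (3). I expect this degenerate case to be the only real obstacle: cases (1) and (2) follow at sight from the eigenvalues, whereas in case (3) one must verify carefully that the leading term on the center manifold is exactly quadratic (and in particular does not vanish), which is precisely what guarantees a saddle-node rather than some other degenerate configuration.
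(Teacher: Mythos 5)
Your proposal is correct and follows essentially the same route as the paper: eigenvalues of the triangular Jacobian settle the hyperbolic cases, and for $1-AC=0$ you diagonalize with the same eigenbasis (your $u=\xi$, $w=\eta-\xi$ is exactly the paper's change of variables) and apply the center manifold reduction, where your leading coefficient $-(C+1)$ is precisely the paper's nonvanishing quantity $1-C-2AC$ evaluated at $AC=1$. Your write-up is in fact slightly more explicit than the paper's, since you carry out the reduction and invoke the even-exponent classification of semi-hyperbolic points rather than asserting the saddle-node conclusion directly.
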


\begin{proof}
For the point $(0, C)$  matrix \eqref{jacob} goes over
$$
J (0,C)= \left(\begin{array}{cc}
-C (-1+AC) & 0  \\
BC &  -BC 
\end{array}\right).$$
Hence  $\lambda_1= -BC<0$ and $\lambda_2 = C(1-AC)$
are the eigenvalues of the Jacobian matrix at $(0,C)$.
\begin{enumerate}
\item[{1.}] If $1-AC<0$, then $\lambda_1<0$ and $\lambda_2<0$, so the equilibrium point is an  attractor point.
\item[{2.}] If  $1-AC>0$, then  $\lambda_1<0$ and $\lambda_2>0$, so the equilibrium point $E_1$ is a hyperbolic saddle.
\item[{3.}] If  $1-AC=0$, then $\lambda_1<0$ and $\lambda_2=0$ and the  equilibrium points $(0,C)$ and $E^*$
collapse. 
In order to  determine the nature  of $(0,C)$, we linearize system \eqref{eq4} at $E_1$ and diagonalize the linear part, then
the center manifold theorem will be applied, see e.g.  \cite{Dumortier, perko}.
Setting $(x, y)= (\xi,\eta+C)$, system \eqref{eq4} is translated to the origin of coordinate,
 then we have the following system
 \begin{equation*}\label{eq3a}
\begin{array}{l}
 \dfrac{d\xi}{dt}=  -BC \xi -B\xi^2+(AC-B) \xi\eta +A\xi\eta^2+(C-1-2 A C)\eta^2 + (1+A)\eta^3, \vspace{0.2cm}\\
 \dfrac{d\eta}{dt}= (1-C-2AC)\eta^2-AC\xi\eta - (1+A)\eta^3-A\xi\eta^2.
\end{array}
\end{equation*}

Since $1-AC=0$ and $C>0$, it follows that $1-C-2AC\neq 0$. 
Then, the non-hyperbolic equilibrium point $E_1=(0,C)$ is an attractor saddle-node.
\end{enumerate}
\end{proof}

\begin{prop} 
For system \eqref{eq4}, if there exists exactly one simple coexistence equilibrium $E^*(x_*,y_*) = (\frac{1- AC}{1+A},\frac{1+C}{1+A})$, it is an attractor point.
\end{prop}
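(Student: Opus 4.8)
The plan is to linearise system \eqref{eq4} at $E^*$ and apply the trace--determinant criterion for planar systems: if the Jacobian at $E^*$ has strictly negative trace and strictly positive determinant, then both eigenvalues have negative real part and $E^*$ is a local attractor. Since $E^*$ is assumed to be a simple coexistence equilibrium with $1-AC>0$, the coordinates $x_*,y_*$ as well as $A,B,C$ are all strictly positive, and this positivity is exactly what will make the sign analysis go through.

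First I would exploit the two defining relations of the interior equilibrium rather than substituting the explicit fractions. Because $x_*\neq 0$ and $C+x_*\neq 0$, the vanishing of $dx/dt$ forces $1-x_*-Ay_*=0$, i.e. $Ay_*=1-x_*$; and since $y_*\neq 0$, the vanishing of $dy/dt$ gives $y_*=C+x_*$. Substituting these identities into the entries of the Jacobian \eqref{jacob} collapses the complicated $(1,1)$ and $(2,2)$ entries: the top-left entry reduces to $-x_*(C+x_*)$ and the bottom-right entry to $-By_*$, so that
\[
J(E^*)=\begin{pmatrix} -x_*(C+x_*) & -Ax_*(C+x_*)\\ By_* & -By_*\end{pmatrix}.
\]

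With the Jacobian in this reduced form the conclusion is immediate. The trace is $-x_*(C+x_*)-By_*$, which is strictly negative since every factor is positive. The determinant factors cleanly as $B\,x_*\,y_*\,(C+x_*)(1+A)$, again a product of positive quantities, hence strictly positive. By the trace--determinant (Routh--Hurwitz) test this places both eigenvalues in the open left half-plane, so $E^*$ is a hyperbolic sink and therefore an attractor point, which is what we want.

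As for difficulty, the argument is essentially computational and presents no serious obstacle; the only place where care is required is the algebraic reduction of the Jacobian, where one must use \emph{both} equilibrium identities (not just one) to reach the clean factorised form. The positivity of all parameters, together with $1-AC>0$ guaranteeing $x_*>0$, then makes the sign determination automatic. It is worth remarking that this local result is consistent with, and refines, the global attractivity of $E^*$ established in \cite{Gonzalez2020}: it shows that $E^*$ is in fact a \emph{hyperbolic} attractor, which is precisely the information needed later when assembling the global phase portrait in the Poincar\'e disc.
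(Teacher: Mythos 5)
Your proof is correct and follows essentially the same route as the paper: linearise at $E^*$ and conclude via the sign of the trace and determinant of the Jacobian. The only difference is cosmetic — you simplify the Jacobian using the equilibrium identities $Ay_*=1-x_*$ and $y_*=C+x_*$ instead of substituting the explicit fractions, which yields the same trace $\frac{(1+C)(AC-1-(1+A)B)}{(1+A)^2}<0$ and determinant $\frac{B(1+C)^2(1-AC)}{(1+A)^2}>0$ as in the paper, just in factorised form.
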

 \begin{proof}
 In the following analysis, we consider $1 - A C>0$.
 The Jacobian matrix in $E^*$ is
 $$
 J (E^*)=\left(\begin{array}{cc}
\dfrac{(1+C)(-1+AC)}{(1+A)^2} & \dfrac{A(1+C)(-1+AC)}{(1+A)^2} \vspace{0.3cm} \\
\dfrac{B(1+C)}{(1+A)} &  -\dfrac{B(1+C)}{(1+A)}
\end{array}\right).
 $$
The two eigenvalues of $ J (E^*)$ are
 $$ \lambda_{1,2} = \frac{-\eta \pm \sqrt{\eta^2+4 B(1+A)^2(AC-1)}}{2(1+A)^2},$$
 where $\eta = -1-(1+A)B+AC$.
 
We can take advantage of the fact that the determinant of a matrix is the product of the eigenvalues, and the trace of the matrix  is the sum of its eigenvalues: 
$\det (J (E^*)) = \dfrac{B(1+C)^2(1-AC)}{(1+A)^2}>0$ and $\text{trace}(J (E^*)) = \dfrac{(1+C)(-1-(1+A)B+AC)}{(1+A)^2}<0 $.
This  means that both eigenvalues  $\lambda_{1,2}$  have  negative sign. 
So $E^*$ is locally  stable.
\end{proof}

At this step, we would point out  that in the work  \cite{Gonzalez2020}, the authors adapted the idea of Korobeinikov \cite{Korobeinikov} 
to prove   the global asymptotic stability of  the coexistence equilibrium point  of  system \eqref{eq1} by using a suitable Lyapunov function. It is clear, therefore, since  systems  \eqref{eq1} and  \eqref{eq4} are topologically equivalent, it follows  that the   equilibrium point $E^*(x_*,y_*) = (\frac{1- AC}{1+A},\frac{1+C}{1+A})$ is also globally asymptotically stable.

\subsection{The infinite equilibrium points}\label{pts2}
In order to characterize the global phase portrait of system (\ref{eq4})  on the Poincar\'e disc (see Appendix \ref{ap_compac}), we will examine the infinite equilibrium points.

In the first local coordinate chart $(U_1,\phi_1)$ system \eqref{eq4}  is written as
\begin{align}\label{Poincare1}
&\dot{u} =u[(1+Au-v)(1+Cv)+Bv(1-u+Cv)],\\
& \dot{v}=v(1+Au-v)(1+Cv),\nonumber
\end{align}
where the dot denotes the derivative with respect to $t$.
We have that both, $u$-axis and $v$-axis are invariant under the flow of system \eqref{Poincare1}. In consequence the first quadrant is an invariant set.
The   points $ (0,0)$ and $(-1/A,0)$ are the infinite equilibria.  
Since we are only  interested in studying the dynamics in  ${\mathbb R}^2_+$, the point $(-1/A,0)$ is discarded.
On the other side, the linear part of system \eqref{Poincare1} at the origin is the identity matrix, so $(0,0)$ is an unstable node.

In the  local chart $U_2$ system \eqref{eq4} becomes
\begin{align}\label{Poincare2}
&\dot{u} =-u \big[A u +(A C- B) v+ u^2 + (B+ C-1) uv + (B C -C)v^2\big],  \\
&\dot{v} =-Bv^2(-1+ u + Cv).\nonumber
\end{align}
Here once again the dot denotes the derivative with respect to $t$. In the following lemma we study the behavior around the origin of the solutions of the system.

\begin{prop}
The local phase portrait at the origin of system \eqref{Poincare2} has two parabolic sectors and two hyperbolic sectors  as is showed in Figure~ \ref{fig:U2}.
\begin{figure}[h!]
\begin{center}
\includegraphics[width=5cm]{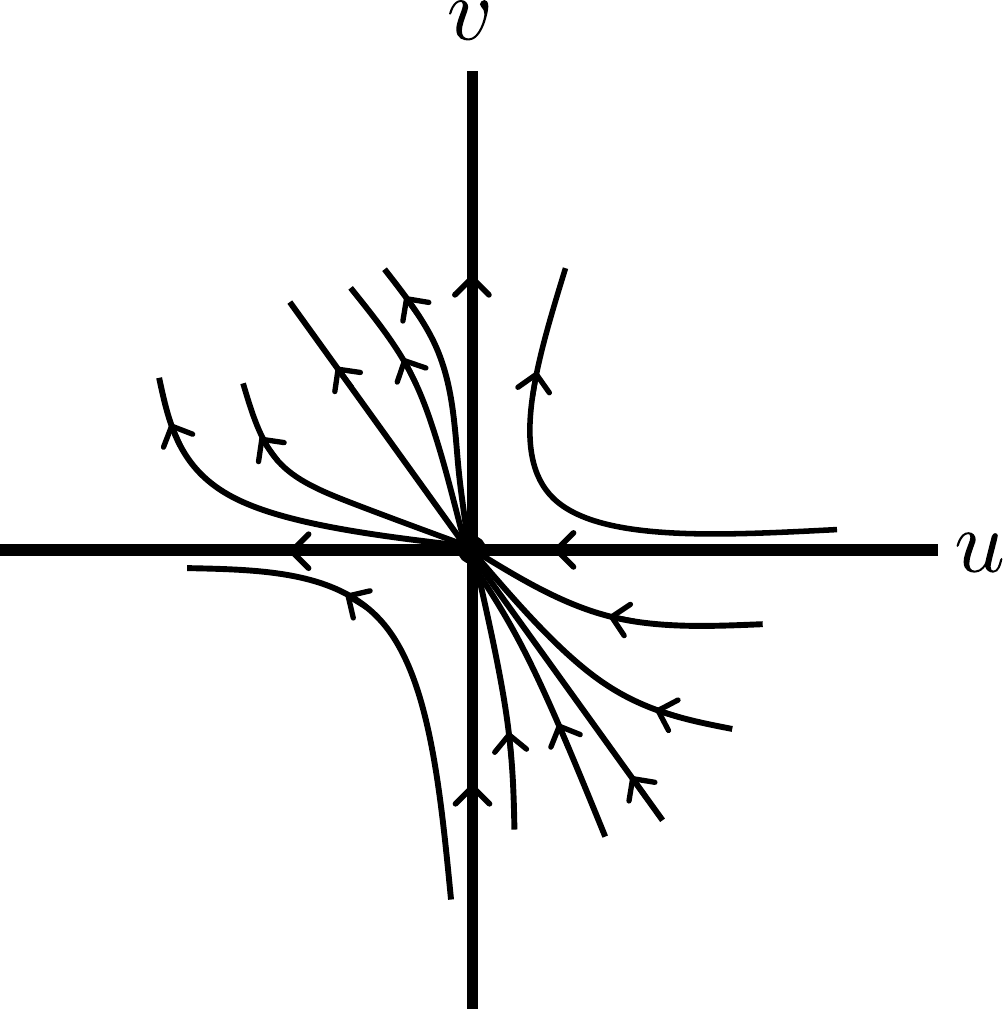}
\end{center}
\caption{Topological local phase portrait of system \eqref{Poincare2}.}
\label{fig:U2}
\end{figure}
\end{prop}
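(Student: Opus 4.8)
The plan is to treat the origin by blow-up, since the linear part of \eqref{Poincare2} vanishes identically: both components are $O(\|(u,v)\|^2)$, so the singularity is degenerate and linearization yields no information. First I would isolate the lowest-order homogeneous part $P_2=-u\bigl(Au+(AC-B)v\bigr)$, $Q_2=Bv^2$ and read off the characteristic directions from
$$u\,Q_2-v\,P_2 = A\,u\,v\,(u+Cv),$$
which vanishes exactly on the three lines $u=0$, $v=0$ and $u+Cv=0$. These are the only directions along which orbits can approach the origin, and since the cubic $uv(u+Cv)$ has simple factors I expect a single blow-up to desingularize. It is also worth recording at the outset that both axes are invariant: on $v=0$ one has $\dot u=-u^2(A+u)$ and on $u=0$ one has $\dot v=Bv^2(1-Cv)$, so near the origin the flow runs toward the origin on the positive $u$-axis and away from it on the negative $u$-axis, while on the $v$-axis it runs away from the origin for $v>0$ and toward it for $v<0$.

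Next I would carry out two directional blow-ups and rescale time by the exceptional variable. In the chart $v=uw$, after dividing the resulting field by $u$, the exceptional line $u=0$ carries two singular points: $(0,0)$, corresponding to the $u$-axis, a hyperbolic saddle with eigenvalues $-A$ and $A$; and $(0,-1/C)$, corresponding to the line $u+Cv=0$, a node with eigenvalues $-B/C$ and $-A$. In the chart $u=vz$, after dividing by $v$, the line $v=0$ carries $(0,0)$, corresponding to the $v$-axis, a saddle with eigenvalues $-AC$ and $B$, and $(-C,0)$, again the direction $u+Cv=0$, a node with eigenvalues $AC$ and $B$. Because every point on the exceptional divisor is hyperbolic, one blow-up indeed suffices and the local portrait at each is obtained directly from its linearization.

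Finally I would blow down, assembling the local phase portrait from the two charts while tracking the orientation of time, which reverses wherever the rescaling divides by a negative value of the exceptional variable. This bookkeeping shows that along $u+Cv=0$ the nodal direction acts as a source for $v>0$ and as a sink for $v<0$; combined with the flow on the invariant axes, this forces all orbits in the second quadrant to leave the origin and all orbits in the fourth quadrant to approach it, giving one parabolic sector in each. In the first quadrant the flow enters along the positive $u$-axis and exits along the positive $v$-axis, and symmetrically the third quadrant has orbits entering along the negative $v$-axis and exiting along the negative $u$-axis, producing a hyperbolic sector in each. Collecting these yields two parabolic and two hyperbolic sectors, arranged as in Figure~\ref{fig:U2}. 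The main obstacle I anticipate is precisely this blow-down bookkeeping: correctly gluing the two directional charts along the common nodal direction $u+Cv=0$ and reconciling its apparent change of stability between the charts, which is an artifact of the sign of the time rescaling rather than a genuine discrepancy.
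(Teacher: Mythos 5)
Your proposal is correct and follows essentially the same route as the paper: both directional blow-ups $(u,v)=(u,uw)$ and $(u,v)=(zv,v)$ with time rescaled by the exceptional variable, the same four hyperbolic equilibria on the exceptional divisor with the same eigenvalues ($\pm A$ saddle and $\{-A,-B/C\}$ node in one chart; $\{-AC,B\}$ saddle and $\{AC,B\}$ node in the other), followed by the blow-down with the quadrant swaps and time-orientation reversal, yielding two parabolic and two hyperbolic sectors. Your preliminary computation of the characteristic directions via $uQ_2-vP_2=Auv(u+Cv)$ and the explicit flow on the invariant axes is a welcome refinement that the paper leaves implicit, but it does not change the argument.
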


\begin{proof}

It follows that the  origin $(0, 0)$ of the chart $U_2$ is an infinite equilibrium point.
Because the linear part of \eqref{Poincare2} is the identically
zero matrix,  then the local phase portrait at $(0,0)$ is determined doing variables changes called blow-ups, see~\cite{jarque}.

In order to get the complete local phase portrait around the origin we need to do both directional blow-ups since $u=0$ and $v=0$ are both characteristic directions (for more details about characteristic directions at an equilibrium point see \cite{andronov}).
We proceed to carry out the $u$-directional  blow-up given by the change of variables $(u, v)= (u, u w)$ and, also we  scale the time variable of the system by $dt/d\tau=u$ to eliminate the common factor $u$. Therefore system \eqref{Poincare2} goes over 
\begin{align}\label{blowup-u}
& \dot{u} = -u [ A + u + (AC-B) w + (B + C-1) u w + C(B- 1 )uw^2], \nonumber\\
& \dot{w} = A w + uw+A C w^2 + uw^2- C uw^3, 
\end{align}
where  the dot denotes derivative with respect to the new time $\tau$.
This system on the straight line $u = 0$ has  the  equilibrium points $(0, 0)$ and $(0, -1/C)$.
The Jacobian matrix of \eqref{blowup-u} at the origin  has as eigenvalues  $A $ and $-A$.
Then,  the equilibrium $(0, 0)$ is a hyperbolic saddle. On the other side, the eigenvalues of the Jacobian matrix at $(0, -1/C)$ are $-A<0$ and $-B/C < 0$, so it is a hyperbolic stable node. 

Going back the blow-up process and undoing the scale change, where we  swap the second quadrant for the third quadrant, and vice versa.
We conclude that the  local phase portrait of the straight line $u = 0$  of system \eqref{Poincare2}  is topologically equivalent to that given 
in  Figure \ref{figure1}(b).
\begin{figure}[h!]
	\centering
	\subfigure[]{\includegraphics[scale=0.3]{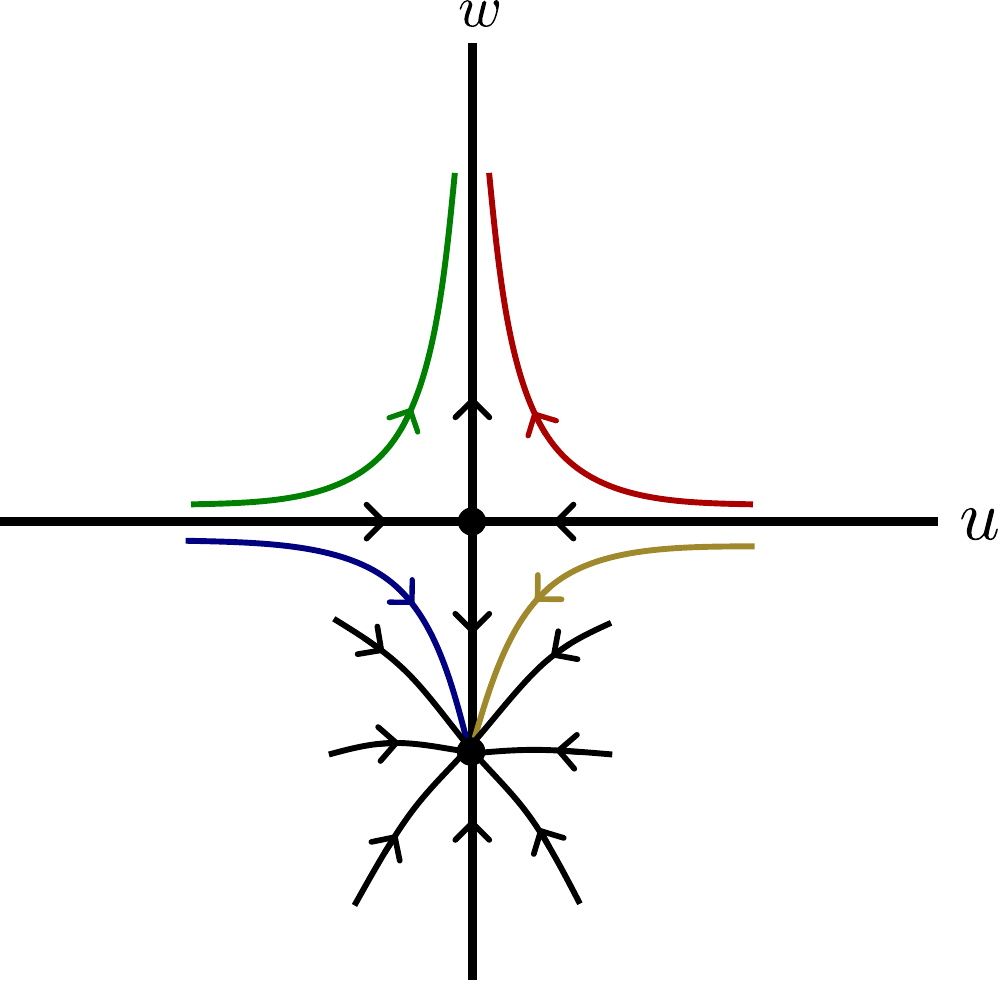}\label{fig1}}
	\hspace{0.8cm}
	\subfigure[]{\includegraphics[scale=0.3]{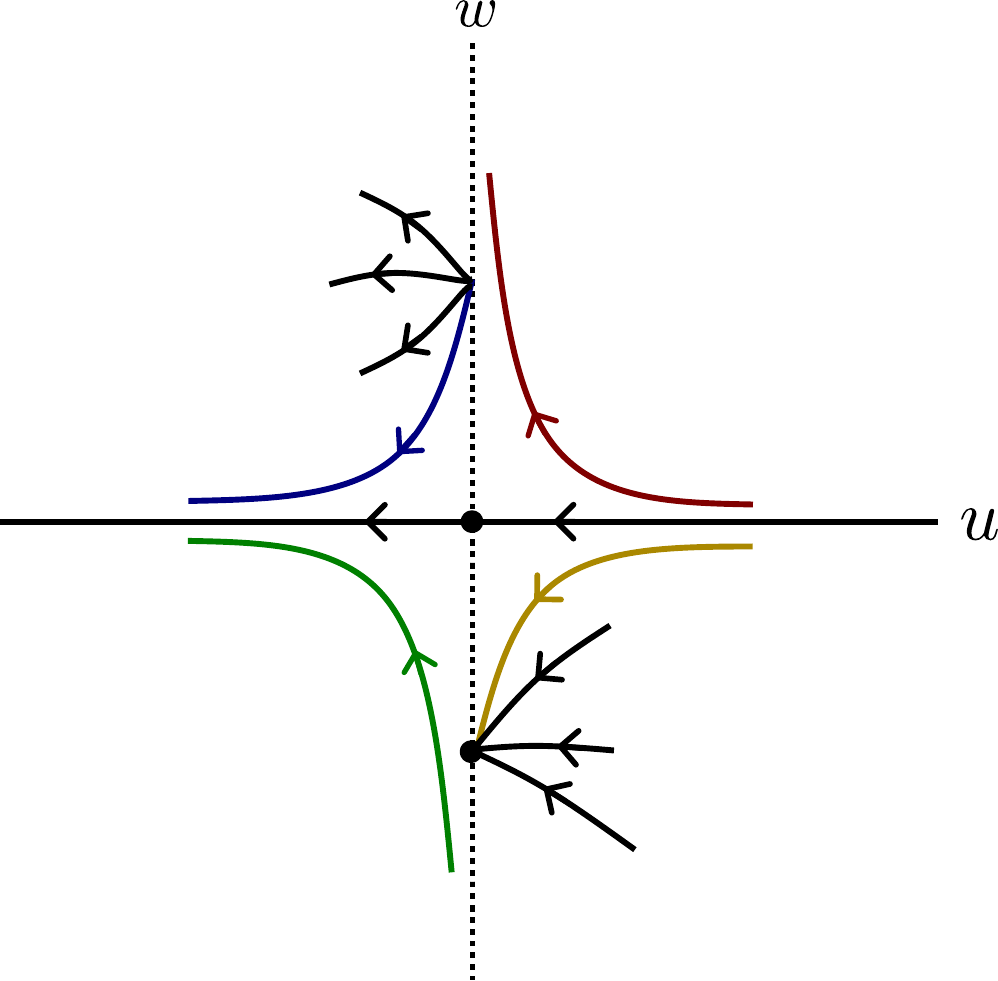}\label{fig2}}	
	\hspace{0.8cm}
\caption{The $u$-directional  blow-up  of the local chart $U_2$ of system  \eqref{blowup-u}.} \label{figure1}
\end{figure}

In order to fully determine the phase portrait of system \eqref{Poincare2},
we do the $v$-directional blow-up. Doing the change of variables   $(u,v)=(zv,v)$ in \eqref{Poincare2}
and eliminating the common factor $v$ through the reparametrization of time $dt/d\tau=v$,
 we obtain that system~\eqref{Poincare2} becomes
\begin{align}\label{blowup-v}
& \dot{z} = - A C z+ C zv - A z^2+ (1-C) z^2v - z^3v,\\
& \dot{v} =Bv^2 - BCv^3- B zv^3,\nonumber
\end{align}
where dot denotes the derivative with respect to $\tau$.
\begin{figure}[h!]
	\centering
	\subfigure[]{\includegraphics[scale=0.3]{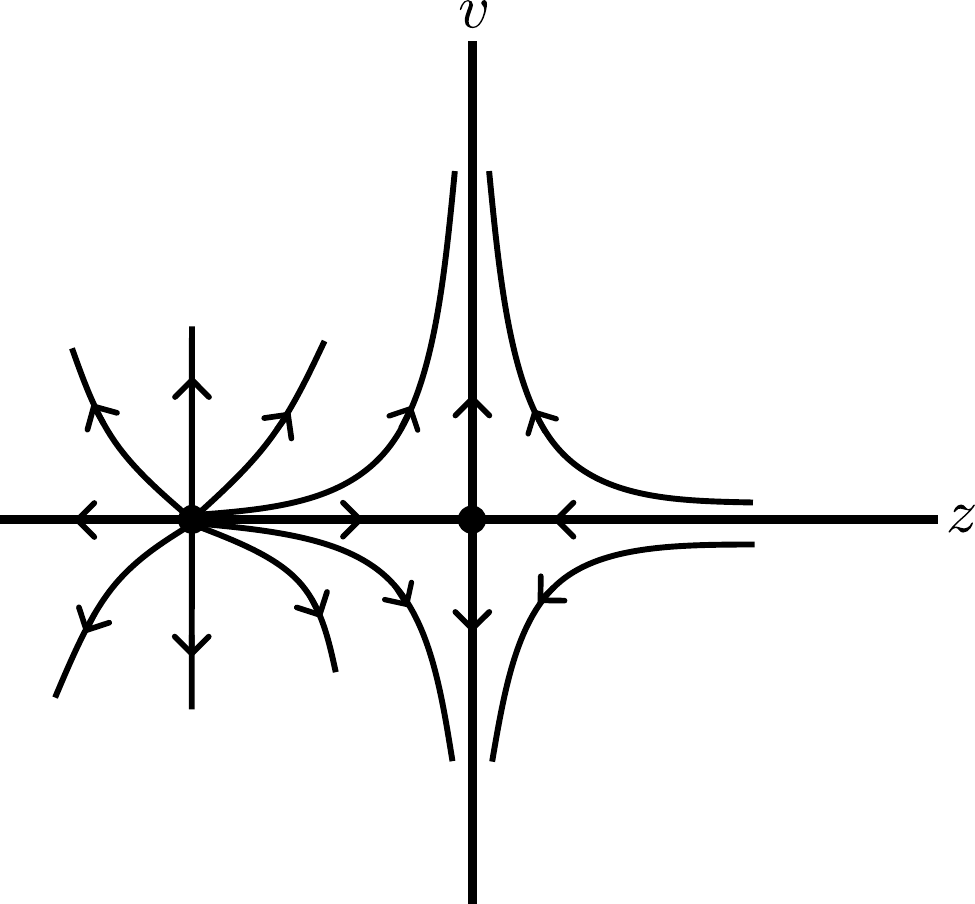}\label{fig21}}
	\hspace{0.8cm}
	\subfigure[]{\includegraphics[scale=0.3]{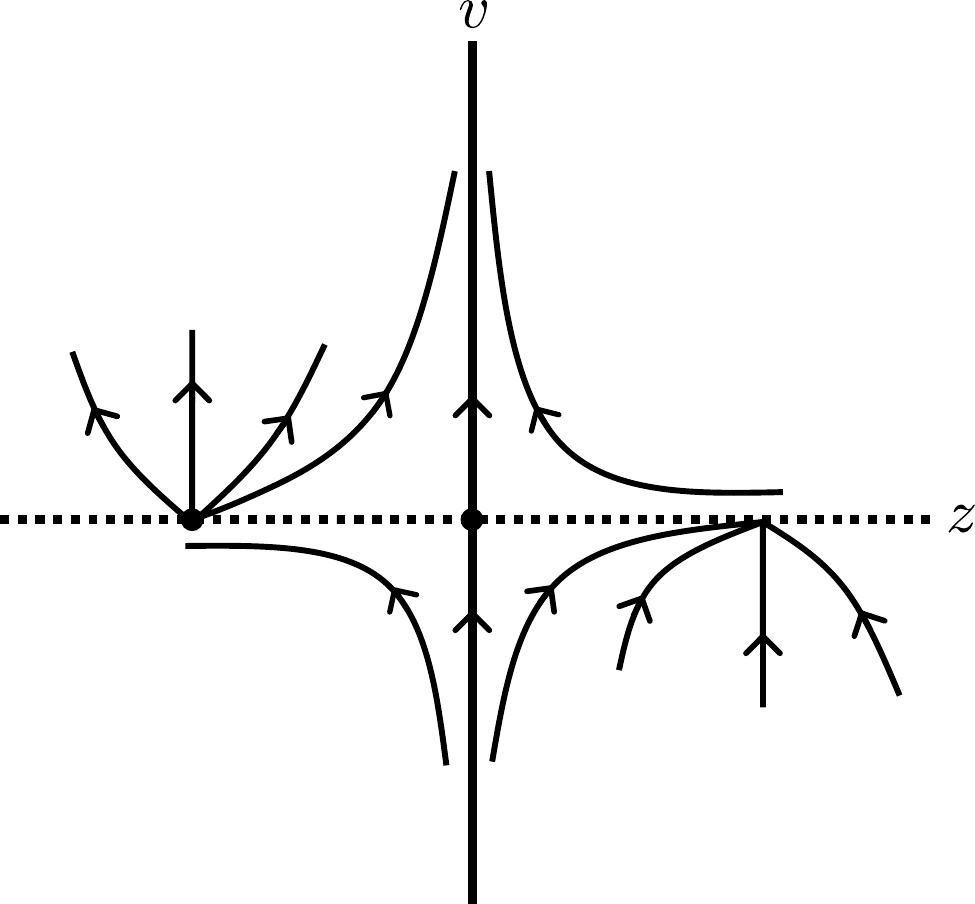}\label{fig22}}	
	\hspace{0.8cm}
\caption{The $v$-directional  blow-up  of the local chart $U_2$ of system  \eqref{blowup-u}.} \label{figure2}
\end{figure}

On  the line $v=0$ system \eqref{blowup-v} has  two equilibrium points: $(0,0)$ and $(-C,0)$.
The linear matrix  of \eqref{blowup-v} at $(0,0)$  has eigenvalues $B>0$ and $-AC<0$ , thus the origin  is a hyperbolic saddle, while 
at  $(-C,0)$ are $B>0$  and  $AC>0$ so it is a hyperbolic unstable node. 
Consequently,  the local phase portrait  of system \eqref{Poincare2} is topologically equivalent to the one given in Figure \ref{figure2}(a).
Back to the plane $(u,v)$ by undoing the change of variable, multiplying by
$v$, the sense of the orbits on the third and fourth quadrants changes and all the points of the $u_2$-axis become singular points.
Also, in the $v$-directional blow-up we must swap the third quadrant and fourth  quadrant, and vice versa.
With these modifications we obtain the phase portrait shown in Figure \ref{figure2}(b). 

Finally, we go back to the differential systems \eqref{Poincare2} in the local chart $U_2$ and the local phase portrait at its
origin is topologically equivalent to the phase portrait of Figure \ref{fig:U2}. Therefore, the origin of the local chart $U_2$
has two parabolic sectors and two hyperbolic sectors. 
\end{proof}

\subsection{Global phase portrait of system \eqref{eq4}}
Now we provide with the topological global phase portrait on the Poincar\'e disc of system \eqref{eq4} using the results obtained in subsections  \ref{pts1} and \ref{pts2}, and the vector field behavior of the system on the coordinate axes.

Combining all these previous information one obtains  the two possible phase portraits of system \eqref{eq4} in the Poincar\'e disc 
corresponding  to quadrant of $\mathbb{R}^2_+$.
We observe that there are two  canonical regions, and the  phase portraits for $1-AC\leq 0$ and $1-AC>0$ are 
topologically equivalent to the ones shown in Figure \ref{fig:global1}, as
is established in  Theorem \ref{Theo2}.

\section{Concluding Remarks}
In this paper, we theoretically derive the global phase portrait of a Leslie-Gower predator-prey model with a linear functional response and a generalist predator. It is well-known that the classical Leslie-Gower model has a unique globally asymptotically stable 
coexistence equilibrium point. Under these modifications, we demonstrate that this global attractor persists only when $r > qc$. Biologically, this means that the intrinsic prey growth rate ($r$) must exceed the product of the maximum per capita consumption rate of the predator ($q$) and the availability of alternative food sources ($c$). Otherwise, when $r \leq  qc$  the coexistence equilibrium point undergoes to a global attractor border equilibrium point,
leading to the extinction of the prey population.

\appendix
\section{Invariant algebraic curves and Liouvillian integrability} \label{ap_inregra}
We consider the following polynomial differential system in $\mathbb{R}^2$, 
\begin{equation} \label{sys_pqq}
\dot{x} = P(x,y), \qquad \dot{y}= Q(x,y),
\end{equation}
where the dot  indicates  differentiation with respect to the time variable $t$. The degree of the polynomial differential system \eqref{sys_pqq} is the maximum value $d$ of the degrees of the polynomials $P$ and $Q$. 

A function $H(x,y)$ is said to be a {\em first integral} of \eqref{sys_pqq} if it is continuous and defined on a full Lebesgue measure subset 
$\Omega\subset {\mathbb R}^2$,  not locally constant on any positive Lebesgue measure subset of $\Omega$ and constant along each orbit in 
$\Omega$ of that system.

The vector field related  to system \eqref{sys_pqq} is defined as
$$
X = P\frac{\partial}{\partial x} + Q\frac{\partial}{\partial y}.
$$
Let $\text{div} (X)= \partial P/\partial x + \partial Q/\partial y$ be the divergence of $X$.

We say that    $f = f (x, y)  = 0$  in $\mathbb{R}^2$   is an {\em  invariant algebraic curve} for the polynomial
differential system \eqref{sys_pqq} if
\begin{equation}\label{eq2_apint}
 P\frac{\partial f}{\partial x} + Q\frac{\partial f}{\partial y} = Kf,
\end{equation}
for some polynomial $K = K(x, y)$, called the {\em cofactor} of the algebraic curve $f=0$. 
From \eqref{eq2_apint} we see  that the degree of the cofactor is at most $d-1$,  and also that the algebraic curve $f = 0$ is formed by trajectories of the polynomial differential system \eqref{sys_pqq}. Therefore, this makes it {\em invariant} with respect to the flow of this system.
 This means that if a trajectory starts on the curve it does not leave it.
 
 Let ${\mathcal E}_m(X)$ be the $m$-th {\em extactic curve} on X, defined by the polynomial equation
 $$
 {\mathcal E}_m(X) = \det \left( 
 \begin{array}{cccc}
 v_1 & v_2 & \cdots & v_l \\
 X( v_1) & X(v_2) & \cdots & X(v_l) \\
 \vdots & \vdots & \cdots & \vdots \\
X^{l -1}( v_1) & X^{l -1}(v_2) & \cdots & X^{l -1}(v_l) 
 \end{array}
 \right) =0,$$
where $v_1,\dots, v_l$ is a basis of ${\mathbb C}_m[x,y]$  (that is, the ${\mathbb C}$ vector space of polynomials in ${\mathbb C}[x,y]$ of degree at most $m$), and then $l =(m+1)(m+2)/2$.

\medskip
\begin{prop}
Every algebraic curve of degree $m$ invariant by the vector field $X$ is a factor of ${\mathcal E}_m(X)$. 
\end{prop}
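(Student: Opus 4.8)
The plan is to exploit the one structural fact that distinguishes an invariant algebraic curve, namely the cofactor identity $X(f) = K f$ from \eqref{eq2_apint}, and to combine it with the multilinearity of the determinant defining ${\mathcal E}_m(X)$. The guiding observation is that $f$ divides not only $X(f)$ but every iterate $X^j(f)$, so any column of the extactic matrix built from $f$ will carry a common factor of $f$ that can be pulled out of the determinant.

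First I would establish the key lemma that $f$ divides every iterate $X^{j}(f)$. This is immediate by induction on $j$: writing $X^{j}(f) = P_j\, f$, the base cases are $P_0 = 1$ and $P_1 = K$, and the Leibniz rule for the derivation $X$ gives
\begin{equation*}
X^{j+1}(f) = X(P_j\, f) = X(P_j)\, f + P_j\, X(f) = \big(X(P_j) + P_j K\big) f,
\end{equation*}
so $P_{j+1} = X(P_j) + P_j K$ is again a polynomial, because the polynomial vector field $X$ maps polynomials to polynomials. Hence every entry of the column vector $\big(f, X(f), \dots, X^{l-1}(f)\big)^{T}$ is divisible by $f$.

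Second, since $\deg f \leq m$, the curve $f$ lies in $\mathbb{C}_m[x,y]$, so $f = \sum_{i=1}^{l} c_i v_i$ for constants $c_i \in \mathbb{C}$ not all zero; after relabelling the basis I may assume $c_1 \neq 0$. Because $X$ is linear, the combination of the columns of the extactic matrix with coefficients $c_i$ is exactly the column $\big(f, X(f), \dots, X^{l-1}(f)\big)^{T}$. Using that the determinant is multilinear and alternating in its columns, I would replace the first column by this combination in two elementary steps: scaling the first column by $c_1$ multiplies the determinant by $c_1$, while adding the remaining multiples $c_i\,(\text{column } i)$ for $i \geq 2$ leaves it unchanged. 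This yields a matrix $M'$ with first column $\big(f, X(f), \dots, X^{l-1}(f)\big)^{T}$ and $\det M' = c_1\, {\mathcal E}_m(X)$.

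Finally, by the lemma each entry of that first column carries a factor of $f$, so factoring $f$ out of the column gives $\det M' = f \cdot \det M''$ for a polynomial matrix $M''$. Combining the two expressions produces $c_1\, {\mathcal E}_m(X) = f \cdot \det M''$, and dividing by the nonzero constant $c_1$ shows $f \mid {\mathcal E}_m(X)$, as claimed. I do not expect a genuine obstacle here; the only point demanding care is the bookkeeping of the column operation, making sure the determinant scales by precisely the scalar $c_1$ rather than by a spurious polynomial factor. This is exactly why one isolates a single nonzero coordinate $c_1$ and scales only that column, instead of dividing the whole linear combination through.
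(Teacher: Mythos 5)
Your proof is correct. Note that the paper itself does not prove this proposition: it states it as a known result and refers the reader to \cite{chris} for the proof, so there is no internal argument to compare against. Your write-up is essentially the standard proof from that reference, made self-contained. The usual presentation observes that ${\mathcal E}_m(X)$ is independent of the choice of basis of ${\mathbb C}_m[x,y]$ up to a nonzero constant factor, and then simply takes a basis whose first element is $f$; your explicit column manipulation (scaling the column of a coordinate $c_1\neq 0$ and adding the multiples $c_i$ of the remaining columns) verifies exactly that basis-independence in the one special case needed, which is a perfectly valid and arguably cleaner way to avoid quoting an auxiliary fact. The key lemma, that $X(f)=Kf$ in \eqref{eq2_apint} propagates by the Leibniz rule to $f\mid X^{j}(f)$ for every $j$, is identical in both treatments, and your induction is sound because $X$ maps polynomials to polynomials. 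Two negligible bookkeeping points: relabelling the basis so that $c_1\neq 0$ permutes columns and may flip the sign of the determinant, a factor of $\pm 1$ that is irrelevant for divisibility; and the hypothesis only guarantees $\deg f = m$, i.e.\ $f\in{\mathbb C}_m[x,y]$, which is all your expansion $f=\sum_i c_i v_i$ uses. Neither affects the validity of the argument.
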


An invariant algebraic curve $f$ of degree $m$ for the vector field $X$  has  {\em algebraic multiplicity} $k$ when $k$ is the greatest positive integer such that the $k$-th power of $f$ divides ${\mathcal E}_m(X)$.
For  more details about the multiplicity of an invariant curve and other properties of the extactic curve, the reader is referred to \cite{chris}.

The algebraic multiplicity of a curve is related to the so-called {\em exponential factor}. 
Let $f,g\in {\mathbb C}[x,y]$ be two coprime polynomials. It is said that $F=\exp (g/f)$ is an  {\em exponential factor} of the system \eqref{sys_pqq} of degre $d$, if 
$$
P\frac{\partial F}{\partial x} + Q \frac{\partial F}{\partial y} = LF,
$$
for some polynomial $L$ of degree at most $d-1$. This polynomial is  named the {\em cofactor} of the exponential factor $F$.
The quotient $g/f$ is called the {\em exponential coefficient} of $X$.
We note that the exponential factors also provide cofactors and appear when invariant algebraic curves collapse, that is, when they have a multiplicity greater than one, see~\cite{chris}.

Next we present two known propositions, whose proof  and  their geometrical meaning are given in \cite{chris}. 
The first one   characterizes the algebraic multiplicity of an invariant algebraic curve in terms of the number of exponential factors of the system associated with the invariant algebraic curve.  
The second identifies which types of invariant algebraic curves give rise to the exponential factor.

\medskip
\begin{prop} \label{prop_integra}
Let $f=0$ be an irreducible invariant algebraic curve of degree $m$ of the polynomial vector field $X$ with cofactor $K$. Then the algebraic multiplicity of the curve $f=0$ is $k$ if and only if $X$ has $k-1$ exponential factors of the form $\exp(g_j/f_j)$ for $j=1,\dots , k-1$ and the degree of $g_j$ is at most $j m$.
\end{prop}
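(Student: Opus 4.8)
The plan is to translate the two notions of multiplicity into a single computation with the extactic determinant, and then to build the exponential factors one power of $f$ at a time. First I would fix a basis $v_1,\dots,v_l$ of $\mathbb{C}_m[x,y]$ adapted to the curve, taking $v_1=f$. Since $X(f)=Kf$, an easy induction gives $X^i(f)=P_i\,f$ for polynomials $P_i$ (with $P_0=1$, $P_1=K$, and $P_{i+1}=X(P_i)+KP_i$), so the column of the extactic matrix associated with $v_1$ is divisible by $f$; hence $f\mid\mathcal E_m(X)$ always. This is exactly the statement that every invariant algebraic curve has multiplicity at least $1$ and corresponds to the case $k=1$ with no exponential factors. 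The whole content of the proposition is therefore the claim that the exact power of $f$ dividing $\mathcal E_m(X)$ exceeds $1$ by precisely the number of independent exponential factors $\exp(g_j/f^{\,j})$ with $\deg g_j\le jm$.

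For the forward direction I would argue inductively, extracting one exponential factor for each extra power of $f$. The model step is $j=1$: I claim $f^2\mid\mathcal E_m(X)$ forces the existence of $g_1$ with $\deg g_1\le m$ and $X(g_1)-K g_1=f\,L_1$ for a polynomial $L_1$, i.e. $\exp(g_1/f)$ is an exponential factor. Indeed, using the adapted basis and the relations $X^i(f)=P_i f$, the vanishing of $\mathcal E_m(X)$ to order $2$ along $f$ is equivalent to the existence of a polynomial $g_1\in\mathbb{C}_m[x,y]$, not a multiple of $f$, whose logarithmic derivative $X(g_1/f)=(X(g_1)-Kg_1)/f$ is a polynomial, which is precisely the defining equation of the exponential factor. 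Iterating, one seeks $g_j$ of degree $\le jm$ with $X(g_j)-jKg_j\in(f^{\,j})$, the solvability being tied to $f^{\,j+1}\mid\mathcal E_m(X)$; performing the successive elementary operations on the columns of the extactic matrix (using both $X^i(f)=P_i f$ and the already-constructed relations for $g_1,\dots,g_{j-1}$) produces the next relation and hence $g_j$, with the bound $\deg g_j\le jm$ read off from the construction. A geometrically transparent alternative is to realize the algebraic multiplicity $k$ as a \emph{deformation} multiplicity: embed $X$ in a family $X_\varepsilon$ for which $f=0$ splits into $k$ distinct invariant curves $f^{(i)}_\varepsilon=f+\varepsilon f^{(i)}_1+\cdots$; then $\log\!\big(f^{(i)}_\varepsilon/f^{(i')}_\varepsilon\big)$ and its higher-order analogues, suitably rescaled as $\varepsilon\to0$, yield the $k-1$ independent exponential coefficients $g_j/f^{\,j}$.

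For the converse I would run the computation backwards: given exponential factors $\exp(g_j/f^{\,j})$, $j=1,\dots,k-1$, their defining identities $X(g_j)-jKg_j=f^{\,j}L_j$ supply explicit elementary operations on the extactic matrix in the adapted basis that extract $k$ factors of $f$ from the determinant, so $f^k\mid\mathcal E_m(X)$; independence of the $g_j$ modulo trivial multiples of $f$ then guarantees that no higher power of $f$ divides $\mathcal E_m(X)$, giving multiplicity exactly $k$. Irreducibility of $f$ enters essentially here: it makes $f$ prime, so divisibility orders are unambiguous and the count of independent exponential coefficients carrying a power of $f$ in the denominator is well defined.

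The main obstacle is the exactness of the count, namely showing that the divisibility order of $\mathcal E_m(X)$ by $f$ rises by \emph{exactly} one with each new independent exponential factor, together with the sharp degree bound $\deg g_j\le jm$. This requires controlling the column space of the extactic matrix modulo successive powers of $f$ and ruling out spurious cancellations; the cleanest route is to pass to the formal completion along $f=0$ (legitimate because $f$ is irreducible) and to identify the quotient space whose dimension records the power of $f$ with the space of admissible exponential coefficients. That identification and the degree bookkeeping are the technical heart of the argument, the remaining steps being formal manipulations of the extactic determinant; full details are given in \cite{chris}.
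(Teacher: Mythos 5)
A structural point first: the paper does not prove this proposition. It appears in Appendix \ref{ap_inregra} as one of ``two known propositions, whose proof and their geometrical meaning are given in \cite{chris}'' (Christopher--Llibre--Pereira), so there is no internal argument to compare yours against. Judged on its own terms, however, your proposal has genuine gaps rather than being a proof. The central claim --- that the order to which $f$ divides $\mathcal{E}_m(X)$ rises by exactly one with each new independent exponential factor, with the degree bound $\deg g_j\le jm$ --- is precisely the content of the proposition, and in your text it is asserted (``the vanishing of $\mathcal{E}_m(X)$ to order $2$ along $f$ is equivalent to the existence of a polynomial $g_1$\dots'', ``the solvability being tied to $f^{\,j+1}\mid\mathcal{E}_m(X)$'') and then explicitly outsourced to \cite{chris}. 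A sketch whose self-described ``technical heart'' is deferred to the reference is a restatement of the result, not a proof of it.

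Moreover, the mechanism you propose for the inductive step cannot work as described. The extactic matrix is built from a basis of $\mathbb{C}_m[x,y]$: every entry, and every polynomial producible by elementary column operations, comes from the $X^i(v_j)$ with $\deg v_j\le m$. But for $j\ge 2$ the numerator $g_j$ may have degree up to $jm>m$, so it simply cannot arise as a combination of columns of that matrix. The argument in \cite{chris} circumvents this by interposing a third notion, infinitesimal multiplicity: one first shows that $f^k\mid\mathcal{E}_m(X)$ is equivalent to the existence of a generalized invariant curve $F=f+\varepsilon f_1+\cdots+\varepsilon^{k-1}f_{k-1}$ over $\mathbb{C}[\varepsilon]/(\varepsilon^k)$ with every $f_i\in\mathbb{C}_m[x,y]$ --- this is where the linear algebra of the extactic determinant is genuinely carried out, since all polynomials involved stay in degree at most $m$ --- and then produces the exponential factors by expanding $\log F$ in powers of $\varepsilon$: the coefficient of $\varepsilon^j$ is a rational function with denominator $f^{\,j}$ and numerator of degree at most $jm$, which is exactly where the stated degree bound comes from. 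Your ``deformation alternative'' gestures at this, but it conflates infinitesimal multiplicity with geometric multiplicity (splitting of $f=0$ under perturbation of $X$), whose equivalence with the algebraic one is itself a theorem of \cite{chris}, not something available for free. One thing you did get right that is worth recording: the statement's $\exp(g_j/f_j)$ must be read as $\exp(g_j/f^{\,j})$; the subscript in the paper is a typo for a power, and your interpretation is the correct one.
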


\medskip
\begin{prop} \label{prop_exp1}
The following statements hold.
\begin{enumerate}
\item[(a)] If $F = \exp(g/f)$ is an exponential factor for the polynomial system \eqref{sys_pqq} and $f$ is not a
constant polynomial, then $f = 0$ is an invariant algebraic curve.
\item[(b)] Eventually, $F=\exp(g)$  can be exponential factors coming from the multiplicity of the infinite invariant straight line.
\end{enumerate}
\end{prop}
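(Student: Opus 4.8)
The plan is to treat the two parts separately, with part (a) a direct computation and part (b) an interpretive projectivization argument. For part (a), I would start from the defining identity of an exponential factor, $X(F) = LF$ with $X = P\partial_x + Q\partial_y$, and differentiate $F = \exp(g/f)$ by the chain rule to get $F_x = F(fg_x - gf_x)/f^2$ and $F_y = F(fg_y - gf_y)/f^2$. Substituting these into $PF_x + QF_y = LF$, cancelling the nowhere-vanishing factor $F$, and clearing the denominator $f^2$ yields the polynomial identity
$$f\,X(g) - g\,X(f) = L f^2,$$
where $X(g) = Pg_x + Qg_y$ and $X(f) = Pf_x + Qf_y$.

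The key algebraic step is to rearrange this as $g\,X(f) = f\big(X(g) - Lf\big)$, so that $f$ divides the product $g\,X(f)$. Since $\mathbb{C}[x,y]$ is a unique factorization domain and $f,g$ are coprime by hypothesis, it follows that $f$ divides $X(f)$. Writing $X(f) = Kf$ and doing the degree bookkeeping, $\deg X(f) \le d + \deg f - 1$ forces $\deg K \le d-1$, so $K$ is a genuine cofactor of the admissible degree. By the definition \eqref{eq2_apint} this is exactly the assertion that $f = 0$ is an invariant algebraic curve, proving (a).

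For part (b) I would handle the case excluded from (a), namely $f$ constant. Absorbing the constant into $g$, I may take $F = \exp(g)$ with $g$ a polynomial, so the defining identity collapses to $X(g) = L$. To read this geometrically I would homogenize: in projective coordinates $[X:Y:Z]$ with $x = X/Z$, $y = Y/Z$, a degree-$n$ polynomial $g$ becomes $g = G(X,Y,Z)/Z^n$ for its homogenization $G$, so $\exp(g) = \exp(G/Z^n)$ is an exponential factor whose denominator is a power of $Z$. Because the Poincar\'e compactification of a polynomial vector field always leaves the line at infinity $Z = 0$ invariant, the part-(a) computation transported to this projective chart identifies $\exp(g)$ as an exponential factor attached to the invariant line $Z=0$, and Proposition \ref{prop_integra} then reads its existence as a manifestation of the algebraic multiplicity of the line at infinity exceeding one.

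I expect the routine computations (the chain rule and the degree count) to be unproblematic; the delicate point is making the claim in (b) precise. The subtlety is that $Z^n$ is not irreducible, so Proposition \ref{prop_integra} must be invoked in the form permitting a power of an invariant line as denominator, and one must first verify that the homogenized field genuinely leaves $Z=0$ invariant before carrying the part-(a) identity into the projective setting. I would cite \cite{chris} for the precise multiplicity statement at infinity, since the qualifier ``eventually'' signals that (b) is an interpretive observation rather than a sharp equivalence.
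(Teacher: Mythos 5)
The paper does not actually prove this proposition---it is quoted as a known result with the proof deferred to \cite{chris}---and your argument for (a) (differentiate $\exp(g/f)$, cancel $F$, clear $f^2$ to get $f\,X(g)-g\,X(f)=Lf^2$, deduce $f\mid g\,X(f)$, and use coprimality in the UFD $\mathbb{C}[x,y]$ to conclude $X(f)=Kf$ with $\deg K\le d-1$) is precisely the standard proof given there. Your reading of (b) likewise matches the literature's interpretation of this deliberately informal statement, and you correctly flag the one real subtlety: Proposition \ref{prop_integra} as stated applies to irreducible affine invariant curves, so the identification of $\exp(g)=\exp\bigl(G/Z^{n}\bigr)$ with multiplicity of the invariant line $Z=0$ at infinity must be taken in the sharpened form proved in \cite{chris}, which is exactly what the qualifier ``eventually'' in the proposition is signalling.
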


Next,  we define a {\em Darboux function} of a vector field $X$  as  a function of the form
\begin{equation} \label{sys_pq2}
D =\prod f_i^{\lambda_i} F_j^{\mu_j},
\end{equation}
where the $f_i=0$ are invariant algebraic curves and $F_j$ are exponential factors of $X$.

The following results come from Darboux, but the present versions  are
proved in~\cite{Dumortier}, they explain how to find Darboux and Liouville first integrals.

\medskip
\begin{thm} \label{teo_app} Suppose that the polynomial system \eqref{sys_pqq}  admits $p$ irreducible invariant algebraic curves 
$f_i=0$ with cofactors $K_i$ and $q$ exponencial factors $F_j$ with cofactors $L_j$. Then, the following statements hod.
\begin{enumerate}
\item[(a)] There exist $\lambda_i$'s and $\mu_i$'s in ${\mathbb C}$, not all zero, such that 
$$\displaystyle{\sum_{i=1}^p \lambda_i K_i + \sum_{j=1}^q \mu_j K_j =0 },$$
if and only if the Darboux function \eqref{sys_pq2} is a first integral of system \eqref{sys_pqq}.
\item[(b)] There exist $\lambda_i$'s and $\mu_i$'s in ${\mathbb C}$, not all zero, such that 
$$\displaystyle{\sum_{i=1}^p \lambda_i K_i + \sum_{j=1}^q \mu_j K_j = -\text{div} (X) },$$
if and only if the Darboux function \eqref{sys_pq2} is an integrating factor of system \eqref{sys_pqq}.
\end{enumerate}
\end{thm}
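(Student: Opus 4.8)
The plan is to derive both equivalences from a single algebraic identity: the logarithmic derivative of a Darboux function along the flow is exactly the linear combination of cofactors appearing in the statement. Writing $Xf = P\,\partial f/\partial x + Q\,\partial f/\partial y$ for the action of the vector field on a function, I would first record the two basic derivations. The definition \eqref{eq2_apint} of an invariant algebraic curve says precisely $Xf_i = K_i f_i$, while the definition of an exponential factor says $XF_j = L_j F_j$. Hence, on the region where $f_i\neq 0$ and $F_j\neq 0$, one has $Xf_i/f_i = K_i$ and $XF_j/F_j = L_j$, both polynomials of degree at most $d-1$.

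Next I would apply the Leibniz rule to the Darboux function $D = \prod_i f_i^{\lambda_i}\prod_j F_j^{\mu_j}$ from \eqref{sys_pq2}. Since $X$ is a derivation, taking logarithmic derivatives gives the key identity
$$\frac{XD}{D} = \sum_{i=1}^{p}\lambda_i\,\frac{Xf_i}{f_i} + \sum_{j=1}^{q}\mu_j\,\frac{XF_j}{F_j} = \sum_{i=1}^{p}\lambda_i K_i + \sum_{j=1}^{q}\mu_j L_j,$$
valid as an identity of rational functions on the region where $D$ is defined and nonzero. This reduces everything to reading off when the right-hand side vanishes or equals $-\operatorname{div}(X)$.

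For statement (a), recall that $D$ is a first integral exactly when $XD = 0$; since $D$ is not identically zero, the identity above shows this is equivalent to $\sum_i\lambda_i K_i + \sum_j \mu_j L_j = 0$. In the reverse direction a genuine (non-locally-constant) first integral forces the exponents not to vanish simultaneously, for otherwise $D\equiv 1$; conversely, a nontrivial vanishing relation produces a $D$ with $XD = 0$, hence constant along orbits. For statement (b), I would unwind the definition of an integrating factor $R$, namely $\operatorname{div}(RX)=0$; expanding yields $XR + R\,\operatorname{div}(X) = 0$, i.e.\ $XR/R = -\operatorname{div}(X)$. Applying the identity with $R = D$ gives the claimed equivalence with $\sum_i\lambda_i K_i + \sum_j \mu_j L_j = -\operatorname{div}(X)$.

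The main obstacle is not the algebra, which is routine once the logarithmic-derivative identity is in place, but the analytic bookkeeping forced by the complex exponents $\lambda_i,\mu_j$: the product $D$ is in general multivalued, so the argument must be phrased in terms of the single-valued rational function $XD/D$ rather than $D$ itself, and one must confirm that a function whose logarithmic derivative vanishes (respectively equals $-\operatorname{div}(X)$) does define a bona fide first integral (respectively integrating factor) on a full-measure real domain, together with verifying the non-locally-constant requirement in the definition of first integral. I would dispatch this by using that the $f_i$ are distinct irreducible curves and the $F_j$ are genuine exponential factors, so that $D$ is non-constant whenever the exponents are not all zero.
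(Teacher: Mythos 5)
The paper does not prove this theorem at all: it is quoted as a classical result of Darboux theory, with the proof explicitly delegated to Chapter 8 of \cite{Dumortier} (and \cite{chris}). Your argument --- the derivation identity $XD/D=\sum_i\lambda_i K_i+\sum_j\mu_j L_j$ for the Darboux function \eqref{sys_pq2}, combined with $XD=0$ characterizing first integrals for part (a) and $XR+R\,\operatorname{div}(X)=0$ characterizing integrating factors for part (b) --- is correct and is precisely the standard proof given in that reference, including your closing remarks on multivaluedness and non-constancy, so it matches the route the paper relies on.
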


In order to  prove the result related to the Liouville first integrals we
make use of the following result, proved in  \cite{singer}.

\medskip
\begin{thm} \label{teo_darint} The polynomial differential system \eqref{sys_pqq} has a Liouville
first integral if and only if it has an integrating factor which is a Darboux function. \label{teo_artes}
\end{thm}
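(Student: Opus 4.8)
The plan is to establish both implications of this classical equivalence (essentially Singer's theorem), using throughout the elementary fact that a function $R$ is an integrating factor of \eqref{sys_pqq} exactly when the $1$-form $R\,(Q\,dx - P\,dy)$ is closed. Expanding the closedness condition gives $\mathrm{div}(RX) = X(R) + R\,\mathrm{div}(X) = 0$, i.e. $X(\ln R) = -\mathrm{div}(X)$; this is the identity I would carry through both directions. The two implications are of very unequal difficulty, so I would treat them separately.

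For the easy implication, assume \eqref{sys_pqq} admits a Darboux integrating factor $R = \prod f_i^{\lambda_i} F_j^{\mu_j}$ of the form \eqref{sys_pq2}. Such an $R$ is Liouvillian, being built from rational functions by algebraic operations, exponentials, and (through the complex powers $f_i^{\lambda_i} = \exp(\lambda_i \log f_i)$) quadratures. The closed $1$-form $\omega_R := R\,(Q\,dx - P\,dy)$ admits a potential $H$ with $dH = \omega_R$, obtained by the single quadrature $H = \int \omega_R$; since the Liouvillian class is closed under quadratures, $H$ is Liouvillian. Along any orbit $Q\,dx - P\,dy$ vanishes, hence $dH = 0$, so $H$ is a nonconstant first integral. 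This yields a Liouvillian first integral.

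The hard implication is where essentially all the work lies. Suppose \eqref{sys_pqq} has a Liouvillian first integral $H$. I would first show that the system then possesses an integrating factor $R$ whose logarithmic derivative $dR/R = U\,dx + V\,dy$ is a \emph{closed rational} $1$-form, i.e. $U,V \in {\mathbb C}(x,y)$ with $\partial U/\partial y = \partial V/\partial x$. This is the substantive part: $H$ lies in a finite tower of differential-field extensions of ${\mathbb C}(x,y)$, each step adjoining an algebraic element, an exponential of a previous element, or an integral of a previous element, and a Rosenlicht--Singer descent argument pushes the relevant logarithmic-derivative data down to the base field ${\mathbb C}(x,y)$, producing the closed rational $1$-form above.

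It remains to recognize this rational $1$-form as the logarithmic derivative of a Darboux function. A closed rational $1$-form on ${\mathbb C}^2$ decomposes, via partial fractions along its polar divisor, as
\begin{equation*}
\frac{dR}{R} = \sum_i \lambda_i\,\frac{df_i}{f_i} + d\!\left(\frac{g}{h}\right),
\end{equation*}
where the irreducible $f_i$ are the denominators carrying logarithmic poles, the $\lambda_i$ are the associated residues, and $g/h$ is rational. Pairing $dR/R$ with the vector field $X$ yields the polynomial $-\mathrm{div}(X)$; the absence of poles forces $f_i \mid X(f_i)$ for each $i$, so every $f_i = 0$ is an invariant algebraic curve and $\exp(g/h)$ is an exponential factor. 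Exponentiating then gives
\begin{equation*}
R = \prod_i f_i^{\lambda_i}\,\exp\!\left(\frac{g}{h}\right),
\end{equation*}
which is exactly a Darboux function \eqref{sys_pq2}, and by statement (b) of Theorem \ref{teo_app} it is an integrating factor. The main obstacle is the descent step of the previous paragraph: reducing the mere existence of a Liouvillian first integral to a closed rational $1$-form defined over ${\mathbb C}(x,y)$ requires the full structure theory of Liouvillian towers, whereas the final partial-fraction decomposition and the identification of the polar curves as invariant are comparatively routine.
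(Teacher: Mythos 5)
The paper offers no proof of this theorem for you to match: in Appendix \ref{ap_inregra} it is stated as a known result and attributed wholesale to Singer \cite{singer}. Measured against the actual literature proof, your outline is the correct one, and it is in fact more precise than the paper's single citation in one respect: Singer's theorem as proved in \cite{singer} produces an integrating factor of the form $\exp\bigl(\int U\,dx+V\,dy\bigr)$ with $U,V\in{\mathbb C}(x,y)$ and $\partial U/\partial y=\partial V/\partial x$, and the further identification of such an integrating factor with a Darboux function \eqref{sys_pq2} is Christopher's refinement --- which is exactly your partial-fraction decomposition of the closed rational $1$-form into logarithmic parts $\lambda_i\,df_i/f_i$ and an exact part $d(g/h)$. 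Your easy direction (a Darboux integrating factor is Liouvillian, and one further quadrature of the closed form $R\,(Q\,dx-P\,dy)$ yields a Liouvillian first integral) is the standard argument and is sound.

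Two caveats. First, the gap you flag yourself is genuine: the descent of a Liouvillian first integral, sitting in a finite tower of algebraic, exponential and integral extensions of ${\mathbb C}(x,y)$, down to an integrating factor with closed rational logarithmic derivative is the \emph{entire} content of Singer's theorem, and your text invokes it (``a Rosenlicht--Singer descent argument'') without performing it; as written your proof is an architecture plus an implicit citation, which is defensible for a result the paper itself only cites, but it is not self-contained. Second, a small repair in the final step: ``the absence of poles forces $f_i\mid X(f_i)$'' needs an order-of-pole argument in the case where some $f_i$ divides $h$, since a logarithmic contribution and the exact part could a priori cancel along a common polar curve. The standard fix: along a non-invariant irreducible factor $f$ of $h$ of multiplicity $n$, the exact part contributes a pole of order $n+1\geq 2$ to $X(g/h)$, while each logarithmic term contributes order at most $1$, so no cancellation is possible and every polar curve is invariant. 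With that in place, $X(g/h)=-\text{div}(X)-\sum_i\lambda_i K_i$ is a polynomial of degree at most $d-1$, so $\exp(g/h)$ is an exponential factor in the sense of the definition preceding Proposition \ref{prop_exp1}, and your conclusion via statement (b) of Theorem \ref{teo_app} goes through.
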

This theorem provides us with  the method of Darboux to find all Liouvillian first
integrals.

\section{Poincar\'e compactification in $\mathbb{R}^2$}\label{ap_compac}
In this section we present a description of the Poincar\'e compactification, this device permit us to obtain a  
 description of the phase portrait in the Poincar\'e disc when dealing with a polynomial vector field in order to find equilibrium points in the finite or infinite part of the plane. 
For more information on this subject the reader is addressed to   Chapter 5 of  \cite{Dumortier} or \cite{Cima1990} and references therein.

Let us consider a polynomial vector field $F(x_1,x_2)=(P(x_1,x_2),Q(x_1,x_2))$ associated to the system of differential equations
\begin{equation*}
\frac{dx_1}{dt}=P(x_1,x_2),\qquad
\frac{dx_2}{dt}=Q(x_1,x_2),
\end{equation*}
where $P(x_1,x_2)$ and $Q(x_1,x_2)$ are polynomials in the plane $(x_1,x_2)$ of degrees $d_1$ and $d_2$, respectively. Let $d=\max\{d_1,d_2\}$ be the maximum of the degrees of both polynomials.
In order to study the global dynamics of system of differential equations associated to the vector field $F$,  this polynomial differential system can be extended by a change of coordinates into an analytic differential system defined on $\mathbb{S}^2.$

Consider the 2-dimensional sphere in $\mathbb{R}^3$, namely, 
$\mathbb{S}^2=\{(y_1,y_2,y_3)\in\mathbb{R}^3\,| \, y_1^2+y_2^2+y_3^2=1 \},$
where we identify 
$T_{(0,0,1)} \mathbb{S}^2$  with $\mathbb{R}^2$ where $T_{(0,0,1)} \mathbb{S}^2$  is the tangent space to $\mathbb{S}^2$ at the north pole $(0,0,1)$ through the equality $(y_1,y_2,y_3)=(x_1,x_2,1)$. Under this identification,   from now on,  we only refer to $\mathbb{R}^2$. The sphere $\mathbb{S}^2$ is the union of the north hemisphere $\mathbb{S}^2_+=\left\{(y_1,y_2,y_3)\in\mathbb{S}^2\,|\,y_3>0\right\}$, the south one $\mathbb{S}^2_-=\left\{(y_1,y_2,y_3)\in\mathbb{S}^2\,|\,y_3<0\right\}$ and the equator $\mathbb{S}^1=\left\{(y_1,y_2,y_3)\in\mathbb{S}^2\,|\,y_3=0\right\}$. 

By using the central projection $\Pi: \mathbb{R}^2\to \mathbb{S}^2$, under which  to each point $x$ in $\mathbb{R}^2$ we associate the two points on $\mathbb{S}^2$ that are on line $\mathcal{L}_x$ that pass through $x$ and the origin of coordinates, these two points are antipodal on the sphere. One point lies on $\mathbb{S}^2_+$ and the other in $\mathbb{S}^2_-$. In doing so, the equator $\mathbb{S}^1$ is identified with the infinity in  $\mathbb{R}^2$. Under the differential $D\Pi$ of the central projection we obtain $\widetilde{F}=D\Pi\circ F$  that provide us with two copies of $F$, one on each hemisphere and none of them is defined on the equator $\mathbb{S}^1,$ but we can obtain a vector field defined over the whole $\mathbb{S}^2$. The new vector field $\mathcal{P}(F)$ defined on $\mathbb{S}^2$ is obtained by multiplying  $D\Pi\circ F$ by $y_3^d$, that is, $\mathcal{P}(F)=y_3^d\widetilde F$. This new vector filed defined on the whole $\mathbb{S}^2$ is known as the Poincar\'e compactification of the vector field $F$ defined on the plane $\mathbb{R}^2$.  

In order to study the behavior of solutions associated to the vector field $F$ near infinity in the plane $\mathbb{R}^2$ it becomes necessary to understand how is the flow  of $\mathcal{P}(F)$     near the equator $\mathcal{S}^1.$ 
To make this study we rely on the fact that $\mathbb{S}^2$ is a manifold and we need  to obtain an expression for $\mathcal{P}(F)$ on $\mathbb{S}^2$ in terms of local coordinate charts.  An atlas that covers it  is given by a set of six local coordinate charts $(U_i,\phi_i),$ $(V_i,\psi_i),$ where $U_i=\left\{(y_1,y_2,y_3)\in\mathbb{S}^2\,|\,y_i>0\right\}$, $V_i=\left\{(y_1,y_2,y_3)\in\mathbb{S}^2\,|\,y_i<0\right\}$, for $i=1,2,3.$ Each one of these maps $\phi_i$ and $\psi_i$ are local maps and  the inverse of the central projections on positive and negative hemispheres in each coordinate $y_i$ directions from the origin to the tangent planes  at the pairs of points $(\pm1,0,0)$, $(0,\pm1,0)$ and $(0,0,\pm1)$, respectively.

Expressions for the  local coordinate charts $\phi_1$ and $\psi_1$ are given, respectively~by 
\begin{equation*}
\phi_1(y_1,y_2,y_3)=-\psi_1(y_1,y_2,y_3)=\left(\frac{y_2}{y_1},\frac{y_3}{y_1}\right)=(u,v).
\end{equation*}
on the coordinate chart on hemispheres $U_1$ and $V_1$.
Let us denote by $(u,v)$ to the image of $(y_1,y_2,y_3)$ under $\phi_1$ or $\psi_1$.  Values $(u,v)$ depend on the considered  local coordinate chart.

In the local coordinate chart $(U_1,\phi_1)$, the following expression are obtained for the differential equations
\begin{equation}\label{PoincCampact1}
\frac{du}{dt}=v^d\left(-uP\left(\frac{1}{v},\frac{u}{v}\right)+Q\left(\frac{1}{v},\frac{u}{v}\right)\right),\quad
\frac{dv}{dt}=-v^{d+1}P\left(\frac{1}{v},\frac{u}{v}\right).
\end{equation}
For $(U_2,\phi_2)$, we get
\begin{equation}\label{PoincCampact2}
\frac{du}{dt}=v^d\left(P\left(\frac{1}{v},\frac{u}{v}\right)-uQ\left(\frac{1}{v},\frac{u}{v}\right)\right),\quad
\frac{dv}{dt}=-v^{d+1}Q\left(\frac{1}{v},\frac{u}{v}\right),
\end{equation}
and, for the local coordinate chart $(U_3,\phi_3)$,
\begin{equation*}
\frac{du}{dt}=P\left(u,v\right),\quad
\frac{dv}{dt}=Q\left(u,v\right),
\end{equation*}
as we see, the expression for the local coordinate chart coincides with the original differential equations. This is consequence of the fact that $U_3=\mathbb{S}^2_+$ and $\phi_3=P_1.$ An important observation is that for the pairs of related coordinate charts $(V_i,\psi_i)$ and $(U_i,\phi_i)$, $i=1,2$, the points $(u,v)$ that lie on $\mathbb{S}_1$ are such that $v=0$, that is, the points on the equator have the form $(u,0)$. It follows from \eqref{PoincCampact1} and  \eqref{PoincCampact2} that, on one side, the equator is invariant under the flow defined by $\mathcal{P}(F)$, and on the other, the equilibrium points at infinity for $F$ are the equilibrium points for $\mathcal{P}(F)$ which lie on $\mathbb{S}_1.$ The Poincar\'e disk $\mathbb{D}$ is obtained by projecting $\mathbb{S}^2_+$ on $y_3=0$ through $\pi(y_1,y_2,y_3)=(y_1,y_2)$. 

If we use the local coordinate charts $(V_i,\phi_i), i=1,2,3$, the differential equations obtained are the given through $U_i,\phi_i$, just multiplied by factor $(-1)^{d+1}$. This last factor plays a key role for studying stability of singularities on the sphere $\mathbb{S}^2.$

Finally, in order to study the qualitative behavior at infinity it is enough to make use of $(U_1,\phi_1)$ and $(U_2,\phi_2).$

\subsection*{Acknowledgment}
The authors would like to thank Jaume Llibre   for his helpful suggestions concerning this work.  
We thank to the  anonymous referees for their valuable
comments and suggestions leading to improvement.
\medskip

\noindent{\bf Funding}  J. D. Garc\'ia-Salda\~na was supported by Direcci\'on de Investigaci\'on of the UCSC through project DIREG 01/2024.
M. \'Alvarez-Ram\'{\i}rez   and M. Medina were  supported by Programa Especial de Apoyo a Proyectos de Docencia e Investigaci\'on  2024, CBI-UAMI.

\noindent{\bf Author Contributions} M. \'Alvarez-Ram\'{\i}rez: Conceptualization, Methodology, Validation, Visualization, Writing
- original draft.  D. Garc\'ia-Salda\~na: Conceptualization, Methodology, Validation, Visualization, Writing
- original draft. M. Medina: Conceptualization, Methodology, Validation, Visualization, Writing- original draft.
\medskip

\end{document}